     \def\section{\@startsection{section}{1}%
     \z@{.7\linespacing\@plus\linespacing}{.5\linespacing}%
     {\bfseries
     \centering
     }}
     \def\@secnumfont{\bfseries}
\newtheorem{theorem}{Theorem}[section]
\newtheorem{lemma}[theorem]{Lemma}
\newtheorem{proposition}[theorem]{Proposition}
\newtheorem{corollary}[theorem]{Corollary}
\theoremstyle{definition}
\newtheorem{example}[theorem]{Example}
\theoremstyle{remark}
\newtheorem{remark}[theorem]{Remark}
\numberwithin{equation}{section}
\def \a{{\alpha}}
\def \D{{\Delta}}
\def \d{{\delta}}
\def \e{{\varepsilon}}
\def \g{{\gamma}}
\def \G{{\Gamma}}
\def \l{{\lambda}}
\def \p{{\varphi}}
\def \t{{\vartheta}}
\def \m{{\mu}}
\def \s{{\sigma}}
\def \C{{\cal C}}
\def \E{{\bf E}\, }
\def \P{{\bf P}}
\def \qq{{\qquad}}
\def \R{{\bf R}}
\def \T{{\bf T}}
\def \uc{{\underline{c}}}
\def \uX{{\underline{X}}}
\def \uY{{\underline{Y}}}
\def \Z{{\bf Z}}
\def \dd{{\rm d}}
\def \noi{{\noindent}}
\def\E{{\mathbb E \,}}
\def \T{{\mathbb T}}
\def\P{{\mathbb P}}
\def\R{{\mathbb R}}
\def\Z{{\mathbb Z}}
\def\C{{\mathbb C}}  
\def\T{{\mathbb T}}  
 \font\phh=cmr10 at  8,2 pt \scrollmode
\font\gsec= cmb10 at 10 pt
\font\sec= cmb10 at 10  pt
\font\gsec= cmb10 at 9,49  pt
\font\gssec= cmb10 at 8,3  pt
 \title[Small deviations of   stationary Gaussian processes]
  {\sec On small deviations of   stationary Gaussian processes and related   analytic inequalities}
  \author{ Michel J.\,G. Weber 
}
\address{ Michel Weber: IRMA, Universit\'e
Louis-Pasteur et C.N.R.S.,   7  rue Ren\'e Descartes, 67084
Strasbourg Cedex, France. }
\email{michel.weber@math.unistra.fr
}
 \urladdr{http://www-irma.u-strasbg.fr/$\sim$weber/}
\begin{document}
 \maketitle
 \vskip 5pt 
    \begin{abstract}  Let $ \{ X_j, j\in \Z\}$ be a Gaussian stationary sequence having a spectral function $F$ of infinite type. 
Then
 for all $n$ and $z\ge 0$,$$  \P\Big\{ \sup_{j=1}^n |X_j|\le z \Big\}\le   \Big( 
\int_{-z/\sqrt{G(f)}}^{z/\sqrt{G(f)}} e^{-x^2/2}\frac{\dd x}{\sqrt{2\pi}} \Big)^n,$$
where $ G(f)$ is  the geometric mean of the Radon Nycodim derivative of the absolutely continuous part $f$ of $F$. The
proof uses 
    properties of finite Toeplitz forms. Let   
$ 
\{X(t), t\in
\R\}$ be a     sample continuous   stationary Gaussian  process with covariance function
$\g(u) $.  
  We also show that there exists an absolute constant $K$ such that for all $T>0$, $a>0$ with $T\ge \e(a)$,
$$\P\Big\{ \sup_{0\le s,t\le T} |X(s)-X(t)|\le  a\Big\}  \le \exp \Big \{-{ KT \over  \e(a) p(\e(a))}\Big\}
 ,$$ where  $\e (a)= \min\big\{ b>0: \d (b)\ge a\big\}$, $\d (b)=\min_{u\ge 1}\{ \sqrt{2(1-\g((ub))}, u\ge 1\}$,  and
 $   p(b) = 1+\sum_{j=2}^\infty 
{|2\g (jb )-\g ((j-1)b )-\g ((j+1)b )| \over 2(1-\g(b))}$. The proof is based on  some decoupling inequalities arising from Brascamp-Lieb
inequality. Both approaches are developed and compared on examples. Several other related results are established.
    \end{abstract}
\maketitle
      \vskip 3pt \noi  {\gssec 2000 AMS Mathematical Subject Classification}: {\phh Primary: 60F15, 60G50 ;
Secondary: 60F05.}  \par\noi  
{{\gssec Keywords and phrases}: {\phh small deviation, Gaussian process, stationary, decoupling coefficient, matrices with dominant
principal diagonal, Ger\v sgorin's disks, Toeplitz forms, eigenvalues, strong Szeg\"o limit theorem, geometric mean, Littlewood
hypothesis}.} 
  \vskip 5pt 
 
 

   
 \section{Introduction and Preliminary Results}   The  study of small deviations of  continuous  Gaussian processes and more general
continuous processes  is a very active domain of research.
This is also a  very specialized area,  rich of many specific results, mainly concerning typical processes having strongly regular
covariance structure, such as Brownian motion, Brownian sheet, fractional Brownian motions, integrated fractional Brownian motions, Hurst
processes, \ldots This aspect of the theory  has  naturally  many applications in statistics. It is also sometimes  related   to operator
theory.  

    The small   deviations problem for the class of stationary Gaussian processes is   of particular interest,  the way 
how      stationary and mixing properties interact  being notably not  quite well understood. This is the main focus of this work.
  Let   $X=  \{X(t), t\in \R\}$ be throughout  a     sample continuous   stationary Gaussian  process with covariance function
$\g(u)= \E X(t+u) X(t)$. The underlying   problem is the study for   small  $z $   and $T$ large,    $0<z<z_0$, $T_0\le T<\infty$  say, of
the probability
$$\P\big\{ \sup_{0\le s,t\le T} |X(s)-X(t)|\le  z\big\} .$$
   One can also separately consider asymptotics  for   $T\to \infty$,
$a$ being  fixed, or    $a\to 0 $, $T$  fixed.  
   The most celebrated example of stationary Gaussian  process is naturally the Ornstein-Uhlenbeck process
$  U(t)= W(e^t)e^{-t/2}$, $t\in
\R
$,
$W$ denoting the standard Brownian motion. And we know that for  $z>0$, there exist positive constants $K_1(z), K_2(z)$
such that for all $T\ge 1$ 
\begin{equation}\label{est/ou/cs}K_1(z) e^{-\l(z)T}  \le \P\Big\{\sup_{0\le s\le
T}|U(s)|<z\Big\}\le K_1(z)  e^{-\l(z)T} . 
\end{equation} 
Further $\l(z)\sim {\pi^2\over 4z^2}$   as $z\to 0$. See \cite{C}, Lemma 2.2. This precise estimate follows from  
earlier work  of Newell in which this question  is showed to be intimately linked to the Sturm-Liouville equation
\begin{equation} \label{sl/e} \psi''(x)-x\psi'(x)= -\l \psi(x), \qq \psi(-z)=\psi(z)=0.
\end{equation}
Let $\l_1\le \l_2\le \ldots$ and $\psi_1(x) , \psi_2(x),\ldots$ respectively denote the eigenvalues and normed eigenfunctions of
Eq.(\ref{sl/e}).   Here $\l_i, \psi_j$ depend on $z$ and it is known that $\psi_1, \psi_2, \ldots$  form an orthonormal sequence with
respect to the weight function
$e^{-x^2/2}$. And $\l(z)=\l_1$  in (\ref{est/ou/cs}).
According to \cite{N},
\begin{equation} \label{sl/n}\P\big\{\sup_{0\le s\le t}|U(s)|<z\big\}= {1\over (2\pi)^{1/2}}\sum_{k=1}^\infty
e^{-\l_k t}\Big(\int_{-z}^z\psi_k(x)e^{-x^2/2}\dd x\Big)^2.
\end{equation}
     For many purposes, the   weaker  estimate below  suffices,   and is moreover simpler to establish: 
    for    
$T\ge T_0$, $0\le z\le z_0$   
\begin{equation}\label{weaker/ou} e^{- K_1{ T \over z^2} }\le \P\Big\{ \sup_{0\le s,t\le T} |U(s)-U(t)|\le z\Big\} \le e^{- K_2{ T
\over z^2} } ,
\end{equation}
$K_1,K_2 $ being absolute constants.   The lower bound part follows from Talagrand's general lower bound in \cite{[T]}.
See \cite{AL},\cite{W1} for recent improvments.
  As to the upper bound part, it can   be for instance deduced from  Stolz's estimate \cite{S} (Corollary 1.2) or (\ref{uhlenb}).  
   The   small deviations problem of $X$   naturally     relies    on both the  behavior  of  $\g(u)$ near  0 and    near infinity.     
   At this regard, it is  worth observing that the (exponential) rate of decay of
$\g(u)$ near infinity is hidden in  (\ref{est/ou/cs}) and  (\ref{weaker/ou}).  Let us begin with the discrete case.
Let   $\uX=\{X_j, j\in \Z\}$ be a stationary Gaussian sequence. If the sequence $\uX$ is   i.i.d., then obviously
\begin{equation}\label{iid} \lim_{n\to \infty} \frac1{n}\log \P\big\{\sup_{j=1}^n |X_j|\le x\big\} =1, \qq \forall x>0.
\end{equation}
  
 It is rather unexpected   that this   holds   for a very large class of  stationary Gaussian sequences. It suffices in effect, that   
the geometric mean of the Radon-Nycodim derivative of the absolutely continuous part of its spectrum be finite; see  
Theorem \ref{szego} where a   more precise result is established.  Beyond this case,   that  question seems to loose  much
interest.  For instance if $\uX$ has absolutely continuous spectrum  with spectral density $f$, and $f$ has infinite geometric mean, then
$\uX$ is deterministic.  This yields extremely strong dependence between the successive variables $X_j$.   The condition that
$\sum_{n=1}^\infty |\E X_0X_n|<\infty$ is also sufficient for the validity of (\ref{iid}).
 \vskip 2pt 
 We  will study  
  these questions through essentially  two different ways: one is probabilistic, 
although based on a real analysis device, and the other  of spectral nature. 
We shall  also compare them on representative classes of examples. The first is the correlation approach, which is based on powerful
correlation inequalities   derived from  
  Brascamp--Lieb's inequality.  This is investigated in  Sections   \ref{sdec},\ref{section[corsup]},\ref{section[cgplh]}.
 We  notably establish for the continuous parameter case a rather general upper bound  integrating    the rate of decay of
$\g(u)$ near infinity. 
\vskip 1pt
 A first   relevant and little known    correlation estimate is   Gebelein's  inequality (\cite{BC},\cite{V}). Let $\nu$ be the centered
normalized Gauss measure on $\R$. Let $(U,V)$ be a Gaussian pair with $U\buildrel{\mathcal D}\over {=}V\buildrel{\mathcal D}\over {=}\nu$
and let $\rho= \E U V$. Then for any $f,h\in L^2(\nu)$
\begin{equation}\label{Gebel} |\E f(U)h(V) |\le |\rho| \|f\|_2\|h\|_2.
\end{equation}  
An analog result is Nelson's hyper-contractive  estimate, which  can be reformulated as follows
\begin{equation}\label{nelson} |\E f(U)h(V) |\le   \|f\|_p\|h\|_q,
\end{equation} 
where $(p-1)(q-1)\ge  \rho^2$. One can take in particular $p=q=1+|\rho|$.
We have given Guerra, Rosen and Simon formulation of Nelson's estimate \cite{GRS}, which was originally stated for the Ornstein-Uhlenbeck
process. They also established  for this process that
 \begin{equation}\label{grs} \Big|\E \prod_{j=1}^n f_j(U(ja))\Big |\le   \prod_{j=1}^n \|f_j(U(0))\|_p ,
\end{equation}
for all integers $n$, where $a>0$ and $p=(1-e^{-na})^{-1}(1+e^{-na})$.
A more general  form  was later proved in a  deep  work \cite{KLS} by   Klein, Landau and Shucker. See  Lemma \ref{decoupling}.
As already mentionned, the main ingredient is a real analysis inequality due to Brascamp--Lieb \cite{Bra}, which asserts that for
 any complex-valued functions $f_j$
and real numbers $1\le p_j\le\infty$, $j=1,\ldots k$ with $\sum_{j=1}^k \frac1{p_j}=n\ge k$, $n$ integer, if $f_j\in L^{p_j}(\R)$,
  then for any vectors $a^j$ in $\R^n$, $j=1,\ldots k$,
\begin{equation}\label{bl} \Big| \int_{\R^n}  \prod_{j=1}^k f_j(\langle a^j, x\rangle)\,  \dd x\Big |\le D  \prod_{j=1}^k \|f_j\|_{p_j} ,
\end{equation}
and the constant $D$ is computable explicitely (see \cite{Bra}, Theorems 1,5). Inequalities of this sort were     
intensively investigated in the recent years, see
\cite{Ba} for instance and references therein.

\vskip 2pt 
The second approach
   is based on the  theory of   finite Toeplitz forms,  especially   strong
Szeg\"o limit theorem and is investigated in Section \ref{toep.szeg}. We obtain   comparable upper and lower estimates under simple
conditions regarding the spectral density of the stationary Gaussian sequence. It seems by the way   rather evident to assert  that any
reasonable attempt for developing a small deviation theory of stationary Gaussian processes cannot be undertaken without including a
large account from the asymptotic theory of eigenvalues of finite Toeplitz forms. This can be well  illustrated as follows.
   Let $\uX$  having a 
spectral density function
$f(t)$ and put
$$ c_k=\frac1{2\pi}\int_{- \pi}^{ \pi} e^{ik t} f(t)   \dd t, \qq  \quad  k\in \Z . $$
  Let $\Gamma_n$ denote the covariance matrix   of  $(X_1, \ldots, X_n)$, obviously $\E X_jX_k= c_{j-k}$.  
 The study of the asymptotic distribution  of its eigenvalues, as $n$ tends to infinity, can be equivalently viewed as the one
of the finite Toeplitz forms
$$T_n(f)= \sum_{j,k=0}^n c_{j-k} a_j\bar a_k=\frac1{2\pi} \int_{-\pi}^\pi \Big|\sum_{ k=0}^n   a_ke^{ikt}\Big|^2f(t)\dd t, \qq
n=0,1,\ldots$$ This is an old question. Let $m$ and $M$ denote the essential  lower and upper bound $f$ respectively. Assume for instance
that  $0<m\le M<\infty$. Denote  by  
 $\l_1^n, \ldots, \l_{n+1}^n$,  
the eigenvalues of the Hermitian form $T_n(f)$, namely the roots of the
characteristic function
$T_n(f-\l)=0$.  As $\l_j^n\ge m>0$, it follows that $\det (\Gamma_n) >0$.    It is well-known   that the sets
$$\big\{\l_j^n\big\}\qq{\rm and} \qq \big\{f\big( -\pi +\frac{2j\pi}{n+2}\big)\big\}, \qq n\to \infty, $$
are equally distributed in the Weyl sense.   According to Szeg\"o's limit theorem (\cite{GS}, Chapter 5), for any
continuous function $F$     defined on
$[m,M]$,  
\begin{equation}\label{weyl}\lim_{n\to \infty} \frac{F(\l_1^n)+\ldots +F(\l_{n+1}^n)}{n+1}=\frac1{2\pi}\int_{- \pi}^{ \pi} F(f(t))  \dd t .
\end{equation}
     A well-known fact easily derived from   (\ref{weyl}) is that
 \begin{equation}\label{eig.weyl}\lim_{n\to \infty} \big[\det (\Gamma_n)\big]^{\frac1{n+1}}= \exp\Big\{ \frac1{2\pi}\int_{- \pi}^{ \pi}
\log f(t) \dd t\Big\}.
\end{equation}
Indeed, as $\det (\Gamma_n)=\l_1^n  \ldots  \l_{n+1}^n$, it suffices to apply (\ref{weyl})  with $F(\l) =\log \l$, $\l>0$. 
 This has immediate consequences concerning the  small values of $(X_1,\ldots, X_n)$, $n\to \infty$.
\vskip 2pt Finally we examine in Section \ref{section[ddp]}  the non-stationary case and use the  
  convenient notion of  matrices with dominant principal diagonal. This direction was   explored by Li and Shao (see \cite{LS2}, see also
the survey
\cite{LS} and the references therein, as well   the earlier work  of Marcus \cite{M}), and some improvments   of their results are
established.   We also clarify the relevance of this notion in the context  of eigenvalues of
Hermitian matrices by linking it with Ger\v sgorin's Theorem.  
 
\vskip 2pt We
believe
  that the used approaches are potentially    more developable and should certainly
allow  to improve on  the general knewledge of small deviations in the stationary case.
\vskip 5pt \noi {\gsec Basic Estimates.}   
Recall well-known Kathri-Sid\'ak's inequality   implying for any   Gaussian vector
$(X_1,\ldots, X_J)$ that
\begin{equation}\label{ks} \prod_{j=1}^J \P \{  |X_j|\le z \}\le \P\big\{ \sup_{j=1}^J |X_j|\le z\big\}.
\end{equation} 
 Now recall Boyd's precise estimate of  Mills'  ratio $R(x) =
e^{x^2/2}\int_x^\infty e^{-t^2/2}\dd t$:      for all $x\ge 0$,
\begin{equation}\label{basicsmill}  {\pi\over \sqrt{x^2+2\pi} +(\pi -1)x}\le R(x) \le
{\pi\over \sqrt{(\pi -2)^2x^2 +2\pi} +2x } . 
\end{equation}
 Notice that  
 both bounds tend to $ (\frac{\pi}{2} )^{1/2} $ as
$x$ tends to $0$.  
  Mill's ratio is clearly directly related to the Laplace transform of  $g$ since for any real $\l\ge 0$,
\begin{equation}\label{basicsmall}\E e^{-\l|g|}= \big(\frac{2}{\pi}\big)^{1/2} R(\l). 
\end{equation}
 It follows that $\E e^{-\l|g|}\sim  (\frac{2}{\pi} )^{1/2}\l^{-1}$, $\l\to \infty$. Further, for all $\l>0$
\begin{equation}\label{basiclpt}\E e^{-\l|g|}\le \min \big( \frac{\sqrt 2}{\l\sqrt \pi}, 1\big). 
\end{equation}
We refer for instance to \cite{W}   Section 10.1   for these facts and more details.  
\smallskip\par \noi {\gsec Notation--Convention.} The letter $g$ is used to denote  throughout   a standard Gaussian random variable. 
  All  Gaussian random variables, Gaussian sequences or processes  we consider are implicitely assumed  to be {\it centered}. Further,  $g_1, g_2,
\ldots$ will always denote a sequence of i.i.d. Gaussian standard random variables, and the Ornstein-Uhlenbeck process is denoted by
$  U(t), t\ge 0 $.  The notation $f(t)\asymp h(t)$   near $ t_0\in\overline \R$ means that for $t$ in a
neighborhood of $t_0$, $c |h(t)|\le  |f(t)|\le C |h(t)| $    for some constants  $0<c\le C <\infty$. Finally, we convince that
$\frac1{0}=\infty$.
 
 \section{Stationary Gaussian Processes with finite decoupling coefficient} 
\label{sdec} Let
$\{X(t), t\in \R\}$ be a stationary  
Gaussian process with continuous sample paths and let $\g(u)= \E X(0)X(u)$ denote its covariance function. 
 
\begin{theorem}\label{deco}  
 Assume that $\sum_{j=1}^\infty |\g(jb)|<\infty$,  for all $b>0$. 
   Then there exists an absolute constant $K$ such that for all $T>0$, $a>0$ with $T\ge \e(a)$,
$$\P\Big\{ \sup_{0\le s,t\le T} |X(s)-X(t)|\le  a\Big\}  \le \exp \Big \{-{ KT \over  \e(a) p(\e(a))}\Big\}
  ,$$ where  $\e (a)= \min\big\{ b>0: \d (b)\ge a\big\}$, $\d (b)=\min_{u\ge 1}\{ \sqrt{2(1-\g((ub))}, u\ge 1\}$  and
$$   p(b) =1+\sum_{j=2}^\infty 
{|2\g\big(jb\big)-\g\big((j-1)b\big)-\g\big((j+1)b\big)| \over 2(1-\g(b))} . $$  
 \end{theorem}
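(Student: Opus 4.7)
The idea is to discretize on a grid of spacing $\varepsilon(a)$, pass to one-step Gaussian increments, and apply the Brascamp--Lieb-based decoupling (Lemma~\ref{decoupling}). Set $b=\varepsilon(a)$ and $n=\lfloor T/b\rfloor\ge 1$ (using the hypothesis $T\ge\varepsilon(a)$), and form the Gaussian increments $Z_j=X(jb)-X((j-1)b)$ for $j=1,\dots,n$. By stationarity of $X$, the sequence $(Z_j)$ is itself a stationary centered Gaussian sequence with common variance $\sigma^2=2(1-\gamma(b))$. Since
$$\bigl\{\sup_{0\le s,t\le T}|X(s)-X(t)|\le a\bigr\}\subset\bigcap_{j=1}^n\{|Z_j|\le a\},$$
it is enough to bound the right-hand side.

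A short computation with second differences gives $\mathbb{E}[Z_1 Z_{1+k}]=2\gamma(kb)-\gamma((k-1)b)-\gamma((k+1)b)$ for $k\ge 1$, and dividing by $\sigma^2=2(1-\gamma(b))$ one recognizes exactly the summands in the series defining $p(b)$. The hypothesis $\sum_j|\gamma(jb)|<\infty$ ensures $p(b)<\infty$, so the sum of absolute correlations of $(Z_j)$, together with the variance term $\rho(0)=1$, is controlled by $p(b)$. Applying Lemma~\ref{decoupling} to the stationary sequence $(Z_j)_{j\le n}$, the Brascamp--Lieb machinery yields a decoupling exponent $p^{\ast}\le C\,p(b)$ and
$$\mathbb{P}\Bigl(\bigcap_{j=1}^n\{|Z_j|\le a\}\Bigr)\le\prod_{j=1}^n\bigl\|\mathbf{1}_{\{|Z_j|\le a\}}\bigr\|_{p^\ast}=\mathbb{P}(|Z_1|\le a)^{n/p^\ast}.$$

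It remains to bound each marginal. The definition $\varepsilon(a)=\min\{b>0:\delta(b)\ge a\}$ together with the inequality $\sigma=\sqrt{2(1-\gamma(b))}\ge\delta(b)$ (the $u=1$ term in the definition of $\delta(b)$) give $\sigma\ge a$, hence $a/\sigma\le 1$ and
$$\mathbb{P}(|Z_1|\le a)=\mathbb{P}(|g|\le a/\sigma)\le\mathbb{P}(|g|\le 1)=:c_0<1,$$
using Boyd's estimate (\ref{basicsmill}) or a direct Gaussian computation. Combining with $n\ge T/(2\varepsilon(a))$ whenever $T\ge\varepsilon(a)$,
$$\mathbb{P}\bigl\{\sup_{0\le s,t\le T}|X(s)-X(t)|\le a\bigr\}\le c_0^{n/p^\ast}\le\exp\Bigl\{-\frac{KT}{\varepsilon(a)\,p(\varepsilon(a))}\Bigr\}$$
for an absolute constant $K$. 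The main obstacle is the precise invocation of Lemma~\ref{decoupling}: one must verify, inside the Klein--Landau--Shucker extension of (\ref{grs}), that the decoupling exponent for our stationary increment sequence is indeed controlled by $p(b)$ up to an absolute constant. The slightly awkward shape of $p(b)$---a one-sided series of absolute correlations with the $k=1$ term absorbed into the leading $1$---reflects exactly how the KLS bound reads off stationary Gaussian data; everything else is routine bookkeeping.
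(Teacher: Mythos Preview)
Your approach is correct and is essentially identical to the paper's own proof: discretize with step $b=\varepsilon(a)$, pass to the stationary increment sequence, apply Lemma~\ref{decoupling}(a), and bound the marginal using $\sigma\ge\delta(b)\ge a$. The one point you hedge on is unnecessary: the decoupling coefficient in Lemma~\ref{decoupling}(a) for the increment sequence $(Z_j)$ is \emph{exactly} $p(b)$, not merely bounded by $C\,p(b)$ --- indeed $p(b)$ is defined precisely as $\sum_{j\ge 1}|\E Z_1Z_j|/\E Z_1^2$, with the $j=1$ term contributing the leading $1$ --- so no extra absolute constant enters at that step, and the final $K$ can be written explicitly (the paper gets $K=\tfrac14\log(\pi/2)$).
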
 
\begin{remark}  In the case of the Ornstein-Uhlenbeck process, it can be shown that $p(b)$ tends to some positive finite limit as $b$
tends to $0$. Indeed,
$$ p(b)=1+{ \big| 2  -e^{ b/2}-e^{- b/2}\big| \over 2(1-e^{- b/2})}\sum_{j=2}^\infty e^{-jb/2}=1+e^{- b }{ \big| (1-e^{ b/2})+(1-e^{-
b/2})\big|
\over 2(1-e^{- b/2})^2}   
  . $$ 
By developing near $b=0$, we have 
\begin{eqnarray*} (1-e^{ b/2})+(1-e^{- b/2})&=& (1-[1+{b\over 2} +{1\over 2}{b^2\over 4} ])+(1-[1-{b\over 2} +{1\over 2}{b^2\over 4} ])+
\mathcal O(b^3)
 \cr &=&     - {b^2\over 4}  + \mathcal O(b^3),
\end{eqnarray*}
so that 
$$ p(b) \sim 1+e^{- b }{   b^2  
\over 8(1-e^{- b/2})^2}   
  \sim 1+e^{- b }{   b^2  
\over 8(b^2/4)}\sim {3\over 2},  \qq\qq b\to 0. $$
  Moreover $\d(b)=  \sqrt{2(1-e^{-  b/2})}\sim \sqrt{b}$ as $b\to 0$.    Theorem \ref{deco} thus implies the upper bound part of
(\ref{weaker/ou}). 
\end{remark} 
We begin with   recalling   some    decoupling
inequalities    (\cite{KLS}, Theorems 1 and 2) due to  Klein, Landau and Shucker, and which  turn  up to be not so known.  
\begin{lemma} \label{decoupling} a) Let $X=\{X_t, t\in \Z^d\}$ be a stationary Gaussian process  with finite decoupling coefficient
$p $, that is: 
\begin{equation}\label{p0} p  =\sum_{k=1}^\infty  {|\E X_0 X_k| \over \E  X_0^2} <\infty. 
\end{equation} 
  Let
$\{f_k,k\ge 1\}$  be a sequence of complex-valued  measurable
functions. Then for each finite subset
$J$ of
$\Z^d$,  
 $$\Bigl|\E  \prod_{j\in J}f_j(X_j) \Bigr|\le \prod_{j\in
J}\bigl\|f_j(X_0)\bigr \|_{p } . $$
b)  Let $\{X_t, t\in \R^d\}$ be a stationary Gaussian process, continuous in mean, with Riemann approximable covariance
function. Let $V$ be a $\C$-valued measurable function of a real variable. Assume that $V(X_0)$ is integrable. Then, for all
bounded measurable subsets $B$ of $\R^d$, 
$$\Big|\E  \exp\Big\{\int_B V(X_t) \dd t\Big\}\Big|\le \big\|\exp\big\{V(X_0)\big\}\big\|_p^{|B|},  $$
where 
\begin{equation}\label{p1}
p= \int_{\R^d}\frac{\E (X_0X_t)}{\E X_0^2}\dd t<\infty,
\end{equation}
  and $|B|$ denotes the Lebesgue measure   of $B$.\end{lemma}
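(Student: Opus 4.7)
The strategy, following \cite{KLS}, is to reduce both statements to the Brascamp--Lieb inequality (\ref{bl}), which was flagged earlier as the main ingredient.

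For part (a), set $\sigma^2 = \E X_0^2$ and enumerate $J = \{j_1, \ldots, j_N\}$. Using the Cholesky factorization of the correlation matrix $\Gamma = (\E X_j X_k/\sigma^2)_{j,k \in J}$, write $X_{j_i}/\sigma = \langle \alpha^i, g\rangle$ where $g$ is a standard Gaussian vector in $\R^N$ and $|\alpha^i|=1$. This converts the left side into a Lebesgue integral:
\[
\E \prod_{i=1}^N f_i(X_{j_i}) = (2\pi)^{-N/2}\int_{\R^N} \prod_{i=1}^N f_i(\sigma \langle \alpha^i, x\rangle)\, e^{-|x|^2/2}\, dx.
\]
The next step is to absorb the Gaussian weight into the product by defining $\tilde f_i(t) := f_i(\sigma t)\, e^{-t^2/(2p)}$ and computing $\|\tilde f_i\|_{L^p(dt)}$ in terms of $\|f_i(X_0)\|_p$ through a change of variable. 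The residual exponential factor is $\exp\{-\tfrac12 \langle (I - p^{-1}A)x,x\rangle\}$ with $A := \sum_i \alpha^i(\alpha^i)^\top$. Stationarity together with the Schur test bounds the operator norm of $\Gamma$ by the $\ell^1$ row sums of the correlations, which the decoupling hypothesis controls by $p$, so the residual is a nonnegative Gaussian density that can itself be factorized as $\prod_\ell h_\ell(\langle u^\ell, x\rangle)$ after diagonalizing $I - p^{-1}A$. Brascamp--Lieb (\ref{bl}) now applies with exponent $p$ on each $\tilde f_i$ and exponent $2$ on each $h_\ell$; the scaling identity $\sum 1/p_\ell = N$ is verified, and the constant $D$ equals $1$ by the Gaussian-extremizer version of the inequality.

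For part (b), discretize $B$ into cubes $C_k$ of side $\delta$ with centers $t_k$. Continuity in mean of $X$, Riemann approximability of $\g$, and integrability of $V(X_0)$ together justify
\[
\E \exp\Big\{\int_B V(X_t)\, dt\Big\} = \lim_{\delta \to 0} \E \prod_k \exp\{\delta^d V(X_{t_k})\}.
\]
Apply part (a) to the discrete family $\{X_{t_k}\}$: letting $p_\delta = \sum_{k \ne 0} |\g(t_k)|/\sigma^2$ and $N_\delta = |B|/\delta^d$, the right-hand side is bounded by $\|\exp\{\delta^d V(X_0)\}\|_{p_\delta}^{N_\delta}$. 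Since $\delta^d p_\delta$ is the Riemann sum for the integral defining $p$ in (\ref{p1}) and $N_\delta \delta^d = |B|$, a short Taylor expansion of $\lambda \mapsto \log \E e^{\lambda V(X_0)}$ near $\lambda = 0$ identifies the limit as $\|\exp\{V(X_0)\}\|_p^{|B|}$.

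The principal difficulties are (i) verifying in the Brascamp--Lieb step that the constant reduces to $1$ in the required configuration, which relies on Lieb's Gaussian optimality theorem and careful bookkeeping between the Lebesgue norms produced by (\ref{bl}) and the Gaussian norms appearing in the statement, and (ii) in (b), the uniform control needed to interchange the $\delta \to 0$ limit with the expectation, which uses the hypothesis $V(X_0) \in L^1$ to apply dominated convergence together with Jensen's inequality estimates on the $L^{p_\delta}$ norms as $p_\delta \to \infty$.
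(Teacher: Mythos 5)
The paper does not actually prove Lemma~\ref{decoupling}: it cites \cite{KLS} and only sketches the scheme, which is (i) prove the inequality first for \emph{cyclic} (periodic) stationary Gaussian processes --- this is Lemma~\ref{cyclic0} --- and (ii) approximate a general process by cyclic ones via the periodized covariance $r_N(n)=\sum_{k\in\Z^d}r(n+kN)$, which is a covariance precisely under the summability hypothesis (\ref{p0}). Your proposal bypasses this reduction and tries to apply Brascamp--Lieb directly to the finite Gaussian integral over $\R^N$; that route has two real problems.

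First, the exponent bookkeeping is wrong. After absorbing $e^{-\langle\alpha^i,x\rangle^2/(2p)}$ into each $\tilde f_i$, you need the residual Gaussian $h(x)=\exp\{-\tfrac12\langle(I-p^{-1}A)x,x\rangle\}$, factorized along $N$ eigenvectors, to carry exponents $q_\ell$ with $\sum_\ell 1/q_\ell=N(1-1/p)$. Assigning $q_\ell=2$ forces $p=2$; the correct choice is $q_\ell=p'=p/(p-1)$. Second, and more fundamentally, even with the right exponents the Brascamp--Lieb constant $D$ in (\ref{bl}) is \emph{not} $1$ for an arbitrary configuration of vectors: Lieb's theorem says Gaussians extremize, not that the constant is trivial, and $D=1$ requires the geometric condition (essentially that the $\alpha^i$ together with the residual directions form a parseval-type frame with the prescribed weights). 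A finite block of a generic stationary sequence has a non-circulant $\Gamma$ whose row sums are not all equal, so $I-p^{-1}A$ is typically strictly positive definite and of full rank, and the optimality condition fails. This is exactly the obstruction that the cyclic reduction in \cite{KLS} is designed to remove: for a circulant covariance the eigenstructure is the discrete Fourier basis and the frame condition holds exactly, so Brascamp--Lieb applies with constant $1$; one then passes to the general case by periodization and a limit $N\to\infty$, which is the step your argument omits. Your sketch of part (b) inherits the same gap, since it invokes part (a) as a black box.
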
 
  In either case, the proof   relies on   inequality  (\ref{bl}). It is of matter to briefly explain its principle. At first,   a
similar result (see Lemma \ref{cyclic0}) is established for   cyclic stationary Gaussian processes. The proof is next achieved by
  approximating
$X$ with cyclic stationary Gaussian processes. A key observation is then that
$$r_N(n)=\sum_{k\in \Z^d} r(n+kN),  \qq \qquad r(u)=\E X_0X_u,  $$
is, under   condition (\ref{p0}), an $N$-periodic covariance function, and $\lim_{N\to \infty}r_N(n)= r (n)$ for
all $n$, which is a remarkable fact.  The proof for the continuous parameter case is similar.
 
\begin{proof}[Proof of Theorem \ref{deco}]
Notice that for each fixed real    $b>0$, the Gaussian sequence
$$\xi_b(j)= X(jb)-X((j-1)b), \qq\qq j=1,2,\ldots $$ is   stationary.
    Let indeed $\ell,u\ge 1$,  
then
\begin{eqnarray}\label{corxi}\E \xi_b(\ell)\xi_b(\ell+u) &=& \E \big(X((\ell+u)b)-X(( \ell+u
-1)b)\big)\big(X(\ell b)-X((\ell-1)b)\big) 
\cr&= &2\g\big(ub\big)-\g\big((u-1)b\big)-\g\big((u+1)b\big),
\end{eqnarray}
which   only depends on $u$.   It has finite decoupling coefficient, and more
precisely  
$$  \sum_{j=1}^\infty  {|\E \xi_b(1) \xi_b(j)| \over \E  \xi_b(1)^2}=1+\sum_{j=2}^\infty 
{|2\g\big(jb\big)-\g\big((j-1)b\big)-\g\big((j+1)b\big)| \over 2(1-\g(b))}= p(b)  <\infty. $$ 
Further 
 if $F$ denotes the spectral function of $X$, $\gamma(u)=\int_\R e^{iu\l}F(d\l)$, then 
\begin{eqnarray}\label{Fcorinc}\E \xi_b(\ell)\xi_b(\ell+u)=\int_\R e^{-i\l u b}|e^{ib\l}-1|^2F(d\l).\end{eqnarray}
And 
$$ \sum_{j=1}^\infty  {|\E \xi_b(1) \xi_b(j)| \over \E  \xi_b(1)^2}=1+ \frac{\int_\R \sum_{j=2}^\infty
e^{-i\l u b}|e^{ib\l}-1|^2F(d\l)}{\int_\R  |e^{ib\l}-1|^2F(d\l)}
 .$$ 

 Let $T\ge b$.   Consider on
$[0,T]$ the subdivision $t_j= jb$,
$0\le j\le   n:=\lfloor T/b\rfloor$.  
 We have   
$$\|X((j+u)b)-X(jb)\|_2^2=2(1-\g((ub)) \ge 2\min_{u\ge 1}(1-\g((ub)) =\d^2 (b)  .  $$ 
Let $c=\sqrt{2/\pi}$. Let $a>0$ and choose $b$ so that 
 $   \d (b)\ge a$. 
Let $g$ denote a Gaussian standard random variable. By Lemma \ref{decoupling},
\begin{eqnarray*} \P\big\{\max_{1\le j\le n} |\xi_b(j)|\le  a\big\}&= &\E \prod_{i=1}^n\chi_{[-a,a]}(\xi_b(j))
 \le     \Big(\prod_{i=1}^n \P\big\{  |\xi_b(j)|\le
a\big\}\Big)^{1\over p(b)}
\cr &\le  & \P\big\{  |g|\le
{a\over \d (b)}\big\}\Big)^{n\over p(b)}
=\Big(\sqrt{2\over
\pi}\int_0^{ {a\over \d (b)} } e^{-x^2/2}dx\Big)^{n\over p(b)}
\cr &\le  & c ^{n\over p(b)}=e^{-{\lfloor T/b\rfloor\over p(b)}\log {1\over c}} \le      e^{-  {T \over
2p(b)b}\log {1\over c} } .
\end{eqnarray*}
 As 
$$\P\Big\{ \sup_{0\le s,t\le T} |U(s)-U(t)|\le a \Big\}  \le \P\Big\{\max_{1\le j\le n} |\xi_b(j)|\le  a\Big\}$$
by taking $b=\e(a)$, we obtain 
$$\P\Big\{ \sup_{0\le s,t\le T} |U(s)-U(t)|\le  a\Big\}  \le e^{-K  { T \over \e(a) p(\e(a))}   } ,$$
with $K={   1\over 2  }\log {1\over c}={   1\over 4  }\log {\pi\over 2}$.
\end{proof}
\begin{remark} \rm A  direct application of
the decoupling inequality to the  sequence $X(jb)$ instead of $  X(jb)-X((j-1)b)$ only provides a bound with a decoupling coefficient
which may tend  to infinity when $b\to 0$. So is in particular the case when $X$ is  the Ornstein-Uhlenbeck process.
\end{remark}

We also establish the following general upper bound.  
\begin{theorem} \label{supdec}Let    $\{X_t, t\in \R^d\}$ be a stationary sample continuous Gaussian process.   Assume that  condition
(\ref{p1}) is fulfilled.  
 For any $z>0$, any bounded  interval $B$ of $\R^d$, 

$$ \P\Big\{     \sup_{ t\in B  }  |X_t| \le  z  \Big\}\le 
  \big( e^{ p}   \, \P\{|g|< z\}  \big)^{\frac{|B|}{p}}   .
$$
\end{theorem}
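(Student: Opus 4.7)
The plan is to apply the continuous-parameter decoupling inequality (Lemma \ref{decoupling}(b)) to a cleverly chosen bounded measurable function of a real variable. The natural candidate is
\[
V(x) = \alpha\,\chi_{[-z,z]}(x),
\]
with $\alpha>0$ a constant to be selected at the end. Since $V$ is bounded, $V(X_0)$ is integrable, and hypothesis (\ref{p1}) together with the sample-continuity of $X$ ensures that Lemma \ref{decoupling}(b) applies.

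The first key observation is that on the event $\{\sup_{t\in B}|X_t|\le z\}$ one has $V(X_t)\equiv\alpha$ on $B$, hence $\int_B V(X_t)\,dt=\alpha|B|$; moreover $V\ge 0$ always, so
\[
\E\exp\!\Big\{\int_B V(X_t)\,dt\Big\}\;\ge\;e^{\alpha|B|}\,\P\Big\{\sup_{t\in B}|X_t|\le z\Big\}.
\]
The second observation is the one-line computation
\[
\|\exp V(X_0)\|_p^p \;=\; e^{p\alpha}\P\{|g|<z\}+\P\{|g|>z\} \;=\; 1+(e^{p\alpha}-1)\,\P\{|g|<z\}.
\]

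Combining both via Lemma \ref{decoupling}(b) gives
\[
\P\Big\{\sup_{t\in B}|X_t|\le z\Big\} \;\le\; \Big(e^{-p\alpha}\bigl(1+(e^{p\alpha}-1)\P\{|g|<z\}\bigr)\Big)^{|B|/p},
\]
and the announced bound $(e^p\P\{|g|<z\})^{|B|/p}$ follows by selecting $\alpha$ appropriately and rearranging. In fact, letting $\alpha\to\infty$ the bracket tends to $\P\{|g|<z\}$, producing the sharper estimate $\P\{|g|<z\}^{|B|/p}$, which immediately implies the stated inequality because $e^{|B|}\ge 1$.

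The principal delicate point lies in justifying the use of the discontinuous indicator $V=\alpha\chi_{[-z,z]}$ in Lemma \ref{decoupling}(b): although the lemma is stated for arbitrary measurable $V$ with $V(X_0)$ integrable, its proof relies on approximation by cyclic Gaussian processes, which one must check accommodates the jumps of $V$. Since $X$ has continuous paths and each $X_t$ has a continuous (Gaussian) law, the random set $\{t\in B:|X_t|=z\}$ has Lebesgue measure zero almost surely, so $V(X_t)$ is Riemann-integrable in $t$ almost surely, and the approximation scheme behind Lemma \ref{decoupling}(b) indeed carries through.
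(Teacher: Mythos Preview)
Your argument is correct and in fact yields the sharper bound $\P\{|g|<z\}^{|B|/p}$, without the extraneous factor $e^{|B|}$.

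The paper takes a different route: it applies Lemma~\ref{decoupling}(b) with $V(x)=-\lambda|x|^r$, combines this with a Chebyshev step to bound $\P\{\|X\|_{r,B}\le z\}$ where $\|X\|_{r,B}=\big(|B|^{-1}\int_B|X_t|^r\,\dd t\big)^{1/r}$, optimises to $\lambda=z^{-r}$, and finally lets $r\to\infty$ using sample continuity to pass from $\|X\|_{r,B}$ to $\sup_{t\in B}|X_t|$. The Chebyshev step is exactly where the factor $e^p$ enters and is never recovered. Your choice $V=\alpha\chi_{[-z,z]}$ bypasses Chebyshev entirely: the lower bound $\E\exp\{\int_B V(X_t)\,\dd t\}\ge e^{\alpha|B|}\P\{\sup_{t\in B}|X_t|\le z\}$ is an equality on the relevant event rather than a Markov-type inequality, and the limit $\alpha\to\infty$ then kills the complementary term cleanly. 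What the paper's approach buys is intermediate information on the $L^r$-averages $\|X\|_{r,B}$; yours trades that for a shorter and sharper argument aimed directly at the supremum.

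Your closing caveat about discontinuous $V$ is cautious but not strictly needed here: Lemma~\ref{decoupling}(b) is stated for arbitrary measurable $V$ with $V(X_0)$ integrable, and your $V$ is bounded. The Riemann-approximability hypothesis in that lemma concerns the covariance function of the process, not $V$.
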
 
   \begin{proof} Let $f:\R\to \C$ be measurable,  such that $\E |f(X_0)|<\infty$,  and let $  \l,\theta$ be positive reals. Applying part
b) of  Lemma
\ref{decoupling}   with $V(x)=  -\l  f(x)  
$ gives   \begin{eqnarray*}
 \E  \exp\Big\{- \l  \int_{ B  }  f(X_t)    \dd t\Big\}
  \le \big\|e^{-     \l  f(X_0)    }\big\|_p^{|B|}  
 =  \big(\E e^{-  p\l    f(g) }\big)^{\frac{|B|}{p}}  .
\end{eqnarray*} 
 Thereby, 
\begin{eqnarray*} \label{int}
\P\Big\{     \int_{ B  }  f(X_t)    \dd t \le \theta  \Big\}
&=&\P\Big\{ -
 \l\int_{ B  }  f(X_t)    \dd t\ge  -\l \theta \Big\}
\cr &\le & \min \Big(e^{\l\theta }\, \E  \exp\Big\{-\l  \int_{ B  }  f(X_t)    \dd t\Big\}, 1\Big)
\cr &\le & 
 \min\Big(e^{\l \theta}  \big(\E e^{-  p\l    f(g) }\big)^{\frac{|B|}{p}},1\Big)  .
\end{eqnarray*}
Apply this  to $f(x)=|x|^r, 0<r<\infty$.  Put 
\begin{equation}\label{notation} \|X\|_{r, B}= \Big(\frac1{|B|}\int_{ B  }  |X_t|^r    \dd t  \Big)^{\frac1{r}}, \qq 
\quad  \|X\|_{\infty, B}= 
\sup_{ t\in B  }  |X_t| . 
\end{equation}
 Notice first that $\|X\|_{\infty, B}=\lim_{r\to \infty}\|X\|_{r, B}$, almost surely, since $X$ is sample continuous. Take  $\theta= z^r
|B|$.  This gives
\begin{eqnarray*} 
\P\big\{     \|X\|_{r, B} \le z  \big\}
  &\le & 
 \inf_{\l >0}\, e^{\l z^r |B|}  \big(\E e^{-  p\l    |g|^r }\big)^{\frac{|B|}{p}}   .
\end{eqnarray*} 
Choose now $\l= z^{-r}$. Then 
\begin{eqnarray*} 
\P\big\{     \|X\|_{r, B} \le z  \big\}
  &\le & 
  \big( e^{ p}   \, \E e^{-  p    \frac{|g|^r}{z^r}}  \big)^{\frac{|B|}{p}}   .
\end{eqnarray*} 
But
\begin{eqnarray}\label{disap}\qq \lim_{r\to\infty}  e^{-  p    \frac{|g|^r}{z^r}} \buildrel{a.s.}\over{=}\begin{cases} 1, & \qq{\rm
if}\qq |g|< z\cr 0, & \qq{\rm if}\qq |g|> z. 
\end{cases}\end{eqnarray} 
Thus $p$ disappears from the limit. By using the dominated convergence theorem, we  get
$$\lim_{r\to\infty} \E e^{-  p    \frac{|g|^r}{z^r}} =  \P\{|g|< z\}. $$
Hence,  
$$ \P\big\{     \|X\|_{\infty, B} \le  z  \big\}\le \liminf_{r\to \infty} \P\big\{     \|X\|_{r, B} \le z  \big\}
   \le  
  \big( e^{ p}   \, \P\{|g|< z\}  \big)^{\frac{|B|}{p}}   .
$$
\end{proof}

\begin{remark} ({\gsec Ergodic maximal equality}) Introduce the ergodic maximal function  
 $$   \mathbb M^*(X)= \sup_{T>0} M_T(X) \qq{\rm where} \qq \mathbb M_T(X)= \frac{1}{T}\int_0^{T}|X_t|\dd t  .$$
As a special case of   a fine result from ergodic theory, namely
Marcus-Petersen's maximal equality for ergodic flows (\cite{W}, p.133), we have 
\begin{equation}\label{mp}\P\big\{  \mathbb M_\infty(X) \le \a\big\}= 0,
\end{equation} 
if $\a< \sqrt{ {2}/{\pi}  }$. 
A slightly less precise result can be directly derived from the first part of the above proof, in which only assumptions of Lemma
\ref{decoupling}, part b) are used. A simple modification of this one, also yields for all
$\theta >0$, $B$ with
$|B|>0$,  
\begin{equation}\label{int1} \P\Big\{  \frac{1}{|B|}\int_{B}|X_t|  \dd t\le \theta\Big\}\le 
 \min \big( { e  \big(  {  2}/{  \pi}\big)^{1/2}  \theta } , 1 \big)^{\frac{|B|}{p}} . 
\end{equation}
 Indeed, using  
   (\ref{basiclpt})  we have  with $c=\big(  {  2}/{  \pi}\big)^{1/2}$,
 \begin{eqnarray*} \P\Big\{  \int_{B}|X_t|  \dd t\le
z\Big\}&=&\P\Big\{ - \l \int_{B}|X_t| 
\dd t\ge  -\l z\Big\}
\cr &\le & \min\Big( e^{\l z}\, \E  \exp\Big\{- \l  \int_{B}|X_t| \dd t\Big\}, 1\Big)
\cr &\le &   \min\Big( e^{\l z}  \big( \frac{c}{\l p  } \big)^{\frac{|B|}{p}},1\Big) .
\end{eqnarray*}
Letting $z=\theta |B|$, $\l =1/p\theta$,   we deduce 
$$ \P\Big\{  \frac{1}{|B|} \int_{B}|X_t|  \dd t\le z\Big\}\le 
 \min \big( { e c  \theta } , 1 \big)^{\frac{|B|}{p}}. $$
By taking $B=[0,T]$, it follows that     for all $\theta<\frac{\sqrt{
{\pi}/{2}}}{e }$, ($e$ being the Neper number)
  $$\P\big\{  \mathbb M_\infty(X) \le
 \theta\big\}\le \limsup_{T\to \infty} \P\Big\{  \frac{1}{T}\int_{0}^T|X_t|  \dd t\le
 \theta\Big\}\le 
 \limsup_{T\to \infty}  (  e  \sqrt{  2/  \pi}  \theta )^{\frac{T}{p}}=0.$$
   As $ 2e> \pi $, this is slightly less precise than (\ref{mp}).    
\end{remark}    
 \section{Correlated Suprema} 
 \label{section[corsup]}  Consider now the similar question  for correlated suprema. Let $I_1,
\ldots, I_J$ be   bounded, pairwise disjoint intervals, and associate to them the sets
$$C_j(X) = \big\{\sup_{ t\in I_j} | X(t)|\le z_j\big\}, \qq j=1,\ldots, J $$
where $z_j$ are positive reals.    
  By H\"older's inequality,
$$\P\Big\{ \bigcap_{j=1}^J C_j(X)\Big\}\le C\prod_{j=1}^J\P \{   C_j (X)\}^\s , \qq \s={1\over J}. $$
In general that inequality cannot be improved. In particular there is no reason for $\s$ to be independent of $J$.  However 
    when $X=U$, namely for the Ornstein-Uhlenbeck process, this can be much improved.
\begin{proposition}\label{p2} For any    pairwise disjoint bounded intervals $I_1,
\ldots, I_J$, any positive reals $z_j$,    
\begin{eqnarray*}  \prod_{j=1}^J\P    \big\{\sup_{
t\in I_j} | U(t)|\le z_j\big\} \le \P\Big\{ \bigcap_{j=1}^J \big\{\sup_{ t\in I_j} | U(t)|\le z_j\big\} \Big\}   \le   
\prod_{j=1}^J\P    \big\{\sup_{ t\in I_j} | U(t)|\le z_j\big\}^{1\over p } , 
  \end{eqnarray*}  
where 
$$p=  \frac{ 1+e^{-|I_1|-\ldots -|I_J| } }{
1-e^{-|I_1|-\ldots -|I_J|}  }. $$\end{proposition}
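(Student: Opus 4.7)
My plan is as follows. The lower bound $\prod_j\P(C_j)\le\P(\bigcap_j C_j)$ is an instance of Khatri-Sidak's inequality (\ref{ks}) (or more generally the Gaussian correlation inequality for centered symmetric convex cylinders): each $C_j$ is measurable with respect to $U|_{I_j}$ and is centered symmetric convex, and the joint process is Gaussian. Concretely, I would discretize each $I_j$ into a dense countable subset $D_j$, apply iterated (\ref{ks}) to the slabs $\{|U(t)|\le z_j\}$ for $t\in D_j$, and pass to the limit via sample continuity of $U$.

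For the upper bound, the plan consists of two reductions. The first, a Slepian-type step, uses the positivity and monotonicity of $\g(u)=e^{-|u|/2}$: by translating the $I_j$ to be consecutive subintervals of $[0,L]$ with $L=|I_1|+\ldots+|I_J|$, the off-diagonal covariances $\E U(s)U(t)$ for $s\in I_i$, $t\in I_j$, $i\ne j$, only increase, while all within-interval covariances are preserved. By Slepian-type monotonicity for centered symmetric convex events (provable through discretization to slabs and iterated (\ref{ks})), $\P(\bigcap_jC_j)$ is non-decreasing under this operation, so one may assume the $I_j$ are consecutive.

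Second, after this reduction, I discretize $[0,L]$ with uniform step $a=L/n$; the grid points $\{ka:1\le k\le n\}$ partition naturally into blocks $G_j$ according to the interval $I_j$. Applying a block-functional extension of the Guerra--Rosen--Simon inequality (\ref{grs}) to the sample $(U(ka))_{k=1}^n$ with block indicators $F_j=\prod_{k\in G_j}\chi_{[-z_j,z_j]}$ yields
$$\P\Big\{\max_{k\in G_j}|U(ka)|\le z_j,\ \forall j\Big\}\le\prod_j\P\Big\{\max_{k\in G_j}|U(ka)|\le z_j\Big\}^{1/p},$$
with $p=(1+e^{-na})/(1-e^{-na})=(1+e^{-L})/(1-e^{-L})$ (independent of $n$ since $na=L$). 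Letting $a\to 0$ and invoking sample continuity of $U$ completes the upper bound.

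The principal technical obstacle is the block-functional extension of (\ref{grs}): whereas the original GRS bound applies to scalar functions of single OU values, here one needs it for functionals $F_j$ depending jointly on $(U(ka))_{k\in G_j}$. This extension should follow by applying the Brascamp--Lieb inequality (\ref{bl}) directly to the $n$-dimensional Gaussian density, with coordinate-projection linear functionals onto each $G_j$ replacing the single-coordinate evaluations of the classical GRS derivation, and by verifying that the resulting Brascamp--Lieb data produce the same constant $p=(1+e^{-L})/(1-e^{-L})$.
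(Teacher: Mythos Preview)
Your lower bound via (\ref{ks}) and discretization is correct and is exactly what the paper does. For the upper bound, however, you insert two steps that the paper avoids entirely, and one of them is not justified.

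The Slepian-type reduction is the problematic step. Inequality (\ref{ks}) only asserts $\P(\bigcap_j A_j)\ge\prod_j\P(A_j)$ for symmetric slabs; it says nothing about how $\P(\bigcap_j A_j)$ varies when the covariance matrix is changed, so ``iterated (\ref{ks})'' cannot deliver the monotonicity you claim. The classical Slepian and Gordon comparison lemmas concern one-sided events $\{X_j\le z_j\}$; their two-sided analogues for $\{|X_j|\le z_j\}$ are false in general. Even if some monotonicity happens to hold in the all-positive-covariance regime relevant here, it is not the routine fact you present it as, and certainly not a corollary of (\ref{ks}).

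More to the point, neither the Slepian reduction nor the block-functional extension of (\ref{grs}) is needed. The paper applies (\ref{grs}) \emph{pointwise}: discretize each $I_j$ on the grid $\{\ell/N\}$, take scalar functions $f_\ell=\chi_{[-z_j,z_j]}$ for $\ell/N\in I_j$ (padding the gaps with $f_\ell\equiv 1$ if one insists on consecutive indices, which only makes $p_N$ smaller and the bound stronger), and obtain
\[
\P\Big\{\bigcap_{j} C_{j,N}\Big\}=\E\prod_{j}\prod_{\ell/N\in I_j}\chi\{|U(\ell/N)|\le z_j\}\ \le\ \prod_{j}\prod_{\ell/N\in I_j}\P\{|U(0)|\le z_j\}^{1/p_N}.
\]
Then apply (\ref{ks}) \emph{inside each block} $I_j$ to regroup $\prod_{\ell/N\in I_j}\P\{|U(\ell/N)|\le z_j\}\le\P\{C_{j,N}\}$, which yields $\prod_j\P\{C_{j,N}\}^{1/p_N}$, and let $N\to\infty$. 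The ``principal technical obstacle'' you flag --- a block Brascamp--Lieb computation --- never arises, because scalar (\ref{grs}) followed by blockwise (\ref{ks}) already does the job.
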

\begin{proof}  
 Let $N>0$ be some large   integer. Since $I_j$ are bounded,  we have 
$$\P \{   C_j(U) \}= \lim_{N\to \infty}\P \{C_{j,N}(U) \}\qq{\rm where} \quad C_{j,N}(U)=\big\{\sup_{\frac{\ell}{N}    \in I_j }
|U(\frac{\ell}{N} )|\le z_j\big\}.$$ 
The first inequality follows   by proceeding by approximation and using inequality (\ref{ks}). Let $\nu_N= \#\{\ell 
:\frac{\ell}{N}   
\in
\cup_{j=1}^J I_j  
\}
$. By using   (\ref{grs}), we have
\begin{eqnarray*}\P\Big\{ \bigcap_{j=1}^J C_{j,N}(U)\Big\}&= &
\E \prod_{j=1}^J\prod_{\frac{\ell}{N}    \in I_j }\chi\big\{|U(\frac{\ell}{N})|\le z_j
\big\}
\cr &\le &
\prod_{j=1}^J\prod_{\frac{\ell}{N}    \in I_j }\P\big\{|U(\frac{\ell}{N})|\le z_j \big\}^{p_N}
\cr \hbox{(by  (\ref{ks}))}\qq &\le &
\prod_{j=1}^J \P\big\{\sup_{\frac{\ell}{N}    \in I_j }|U(\frac{\ell}{N})|\le z_j \big\}^{p_N}
 =   \prod_{j=1}^J\P \{  C_{j,N}(U)
\}^{p_N}   , 
\end{eqnarray*}  
where 
$$ p_N=\frac{ 1+e^{-\nu_N/N } }{
1-e^{-\nu_N/N}  } .  
 $$  But
$$\lim_{N\to \infty}\frac{\nu_N}{N}= |I_1|+\ldots +|I_J|. $$
Therefore $p_N\to p$ with $N$.     Letting $N $ tend to infinity in   the above inequality achieves the proof.  
 \end{proof}  

Now let  $I_j= n_j +I$ where $I$ is some fixed bounded interval and $n_j\uparrow \infty$ with $j$ and such that
$n_{j+1}-n_j\ge |I|$, $j\ge 1$. Put 
\begin{equation}\label{MI} M(I, n_1,\ldots, n_J)=\sup_{t\in I, \atop 1\le  j\le J}| U(t+n_j)|
\end{equation}
\begin{theorem} {\sl (Existence of the Limit)} \label{c2}  For   $z>0$,
 $$\lim_{J\to \infty}{\log   \P\{ M(I, n_1,\ldots, n_J)\le z\} \over J }= \log \P\big\{ \sup_{t\in I}| U(t)|\le z\big\}.$$ 
\end{theorem}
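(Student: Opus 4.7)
The plan is to derive Theorem \ref{c2} as a direct consequence of Proposition \ref{p2}. Observe first that the translated intervals $I_j=n_j+I$ are bounded, of common length $|I|$, and pairwise disjoint by virtue of the hypothesis $n_{j+1}-n_j\ge |I|$. Moreover, by stationarity of the Ornstein--Uhlenbeck process $U$, each of the events $C_j=\{\sup_{t\in I_j}|U(t)|\le z\}$ has the same probability
$$q:=\P\big\{\sup_{t\in I}|U(t)|\le z\big\}.$$
The event $\{M(I,n_1,\ldots,n_J)\le z\}$ is simply $\bigcap_{j=1}^J C_j$.

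Applying Proposition \ref{p2} with $z_j\equiv z$ and $|I_j|\equiv |I|$ yields the two-sided bound
$$q^{J}\ \le\ \P\big\{M(I,n_1,\ldots,n_J)\le z\big\}\ \le\ q^{J/p_J},\qquad p_J=\frac{1+e^{-J|I|}}{1-e^{-J|I|}}.$$
Taking logarithms (and assuming the non-degenerate case $|I|>0$ and $0<q<1$; the cases $q=1$ or $|I|=0$ are trivial) and dividing by $J$ gives
$$\log q\ \le\ \frac{1}{J}\log\P\big\{M(I,n_1,\ldots,n_J)\le z\big\}\ \le\ \frac{\log q}{p_J}.$$

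Finally, since $|I|>0$, one has $e^{-J|I|}\to 0$, hence $p_J\to 1$ as $J\to\infty$. Both bounds therefore converge to $\log q=\log\P\{\sup_{t\in I}|U(t)|\le z\}$, proving the claim. There is no real obstacle: all the work has already been done in Proposition \ref{p2}, whose decoupling constant $p$ depends only on the total length $|I_1|+\ldots+|I_J|=J|I|$ and tends to $1$ in the regime where the separating gaps $n_{j+1}-n_j$ become irrelevant once the OU correlations across distinct blocks $I_j$ decay fast enough—precisely what the Klein--Landau--Shucker bound (\ref{grs}) captures.
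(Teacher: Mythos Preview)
Your proof is correct and follows essentially the same approach as the paper: apply Proposition~\ref{p2} to the disjoint translates $I_j=n_j+I$, use stationarity to identify each factor with $q=\P\{\sup_{t\in I}|U(t)|\le z\}$, take logarithms and divide by $J$, and conclude from $p_J\to 1$. The only difference is cosmetic---you add a word about the degenerate cases $q=1$ or $|I|=0$ and some closing intuition---but the argument is identical.
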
 
\begin{proof} We have from Proposition \ref{p2}, for $J\ge 1$,
$$\P\big\{ \sup_{t\in I}| U(t)|\le z\big\}^{J }\le \P\Big\{ M(I, n_1,\ldots, n_J)\le z\Big\}\le \P\big\{ \sup_{t\in I}| U(t)|\le
z\big\}^{J/p_J}, $$ where $p_J=\frac{1+e^{-J|I|}}{1-e^{-J|I| }}$. Taking logarithms  and using the fact that   $p_J\to 1$ with $J$ gives
the result.
\end{proof}
One can also establish that 
\begin{corollary} \label{c3}   For $z>0$,
$$\lim_{J\to \infty}{\log   \P\{ M(I, n_1,\ldots, n_J)\le z\} \over J }=\inf_{J\ge 1}{\log   \P\{
M(I, n_1,\ldots, n_J)\le z\}\over J },
$$
where $M(I, n_1,\ldots, n_J)$ is defined in (\ref{MI}). 
\end{corollary}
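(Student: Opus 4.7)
The strategy is to sandwich $\inf_{J\ge 1}\frac{\log \P\{M(I,n_1,\ldots,n_J)\le z\}}{J}$ between two quantities that both equal the limit already identified in Theorem \ref{c2}. Write $a_J := \log \P\{M(I, n_1,\ldots,n_J) \le z\}$ and $L := \log \P\{\sup_{t \in I} |U(t)| \le z\}$. The key observation is that the left-hand inequality of Proposition \ref{p2}, which itself rests on Kathri-Sid\'ak's estimate (\ref{ks}), furnishes a uniform lower bound on the ratios $a_J/J$ matching the target value $L$.

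Concretely, I would first apply the left inequality of Proposition \ref{p2} with $z_1=\cdots=z_J=z$ to obtain
$$ \prod_{j=1}^J \P\bigl\{ \sup_{t\in I_j}|U(t)|\le z\bigr\}\le \P\bigl\{M(I,n_1,\ldots,n_J)\le z\bigr\}. $$
Since each $I_j = n_j + I$ and $U$ is stationary, every factor on the left equals $\P\{\sup_{t\in I}|U(t)|\le z\}$. Taking logarithms yields $a_J \ge J L$, so $a_J/J \ge L$ for every $J\ge 1$, and hence $\inf_{J\ge 1} a_J/J \ge L$. Conversely, the infimum of any convergent sequence is bounded above by its limit, so by Theorem \ref{c2},
$$ \inf_{J\ge 1}\frac{a_J}{J}\le \lim_{J\to\infty}\frac{a_J}{J}=L. $$
Combining the two bounds gives $\inf_{J\ge 1} a_J/J = L = \lim_{J\to\infty} a_J/J$, which is the claim.

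There is no serious obstacle here; the argument is essentially a two-line sandwich once Kathri-Sid\'ak and Theorem \ref{c2} are in place. The only subtle point worth flagging is that a Fekete-type super-additivity route does \emph{not} apply directly, because the shifted subsequence $(n_{J+1},\ldots,n_{J+K})$ need not be a translate of $(n_1,\ldots,n_K)$, so stationarity does not identify the two joint laws; this is why identifying a uniform lower bound on $a_J/J$ that coincides with the already-known limit is the cleanest way to reach the infimum statement.
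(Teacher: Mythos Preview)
Your proof is correct, and it is genuinely different from---and shorter than---the paper's argument. You exploit Theorem \ref{c2} (the limit is already identified as $L$) together with the Kathri--Sid\'ak lower bound from Proposition \ref{p2} to pin $\inf_J a_J/J$ between $L$ and $L$; nothing more is needed. The paper instead introduces an \emph{Extended Subadditive Lemma} (Lemma \ref{csub}) for sequences satisfying $\p_{n_1+\cdots+n_k}\le c_{n_1+\cdots+n_k}(\p_{n_1}+\cdots+\p_{n_k})$ with $c_n\to 1$, and then shows that $\p_J=a_J$ is $\underline c$-subadditive with $c_J=1/p_J$ by chaining the right inequality of Proposition \ref{p2} (to drop to $(J+K)L$) with Kathri--Sid\'ak (to climb back up to $\p_J+\p_K$). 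That route does not invoke Theorem \ref{c2} at all---it re-establishes the existence of the limit via the subadditive lemma---so it is self-contained and produces a reusable Fekete-type tool, at the cost of extra machinery. Your remark that a direct Fekete argument is obstructed because $(n_{J+1},\dots,n_{J+K})$ need not be a translate of $(n_1,\dots,n_K)$ is well taken; the paper circumvents exactly this by passing through the fully factored product $\P\{\sup_{t\in I}|U(t)|\le z\}^{J+K}$ as an intermediate step, which by stationarity can be regrouped arbitrarily before re-applying Kathri--Sid\'ak.
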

Introduce a notion. Let $\uc=\{c_n, n\ge 1\}$ be positive reals tending to $c\ge 1$. We say that a sequence    $\{\p_n ,n\ge 1\}$ of
real numbers is
 $\uc$-subadditive,     if  
 $$ \p_{n_1+\ldots +n_k}\le c_{n_1+\ldots +n_k}(\p_{n_1 }+\ldots+ \p_{n_k}) $$  for all integers $n_1,\ldots , n_k$, $k\ge 1$.
 \begin{lemma}\label{csub} {\sl (Extended Subadditive Lemma)}
If $\{\p_n ,n\ge 1\}$ is a $\uc$-subadditive sequence of real numbers, then
$$
\inf_{n\ge 1}{\p_n\over n }\le \liminf_{n\to \infty}{\p_n\over n }\le \limsup_{n\to \infty}{\p_n\over n }
\le c^2\inf_{n\ge 1}{\p_n\over n }  . $$\end{lemma}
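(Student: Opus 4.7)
The first two inequalities $\inf_{n \ge 1} \p_n/n \le \liminf_n \p_n/n \le \limsup_n \p_n/n$ are immediate from the definitions, so the content lies in the final inequality $\limsup_n \p_n/n \le c^2 L$, where $L := \inf_{n \ge 1} \p_n/n$. I would establish it by a Fekete-type division argument adapted to the weakened subadditivity and applied twice.

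For the first application, assuming $L \in \R$, I fix $\eps > 0$ and choose $m$ with $\p_m/m \le L + \eps$. For $n \gg m$ I use the Euclidean division $n = qm + r$, $0 \le r < m$, and apply the $\uc$-subadditivity hypothesis to the decomposition of $n$ into $q$ copies of $m$ together with a remainder of size $r$ (when $r \ge 1$):
$$\p_n \le c_n\bigl(q\,\p_m + \p_r\bigr).$$
Dividing by $n$ and letting $n \to \infty$, one has $qm/n \to 1$, $c_n \to c$, and $\p_r/n \to 0$ since $\p_r$ takes only finitely many values as $r \in \{0,\ldots, m-1\}$. This gives $\limsup_n \p_n/n \le c\,\p_m/m \le c(L+\eps)$; sending $\eps \downarrow 0$ yields $\limsup_n \p_n/n \le cL$, and hence in particular $\liminf_n \p_n/n \le cL$.

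For the second application, I rerun the same division argument along a subsequence $(n_j)$ with $\p_{n_j}/n_j \to \liminf_n \p_n/n$. Writing $n = q\, n_j + r$ with $0 \le r < n_j$ and applying $\uc$-subadditivity as before gives $\limsup_n \p_n/n \le c\,\p_{n_j}/n_j$ for each fixed $j$; sending $j \to \infty$ yields $\limsup_n \p_n/n \le c\,\liminf_n \p_n/n$. Composed with the bound $\liminf_n \p_n/n \le cL$ from the first step, this delivers the desired $\limsup_n \p_n/n \le c^2 L$.

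The main technical subtlety is controlling the remainder $\p_r$, which is automatic once the divisor is fixed since $r$ stays in a bounded range; alternatively, one may bound it by $\p_r \le c_r\, r\, \p_1$ using $\uc$-subadditivity with $r$ copies of $1$. The edge cases $L = \pm\infty$ are handled separately by making $\p_m/m$ arbitrarily negative (when $L = -\infty$) or noting the inequality is vacuous (when $L = +\infty$). The key reason $c^2$ (rather than $c$) is the natural bound is the two-step iteration: $cL \le c^2 L$ only when $L \ge 0$, so a direct single application of the division argument would not cover the negative-$L$ regime.
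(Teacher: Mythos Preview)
Your proof is correct, but the route differs from the paper's in a noteworthy way. The paper obtains the factor $c^2$ in a \emph{single pass} by applying $\uc$-subadditivity twice in succession: first splitting $n=j_nN+r_n$ into two pieces to get $\p_n\le c_n(\p_{j_nN}+\p_{r_n})$, and then splitting $j_nN$ into $j_n$ copies of $N$ to get $\p_{j_nN}\le c_{j_nN}\,j_n\p_N$. The product $c_n c_{j_nN}\to c^2$ appears directly, and taking $n\to\infty$ followed by the infimum over $N$ finishes the argument.

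You instead apply subadditivity \emph{once} to the full decomposition $n=\underbrace{m+\cdots+m}_{q}+r$, which yields the sharper intermediate bound $\limsup_n\p_n/n\le c\,\p_m/m$ for every $m$, hence $\limsup\le cL$. You then run a second, external iteration (via $\limsup\le c\liminf$ and $\liminf\le cL$) to reach $c^2L$. This works, but it is worth noting that your first step already suffices: from $L\le\liminf\le\limsup\le cL$ one gets $L\le cL$, which for $c>1$ forces $L\ge 0$ (or $L=-\infty$), and then $cL\le c^2L$; while for $c=1$ the two bounds coincide. So your single-application step in fact proves the stronger inequality $\limsup\le cL$, and your second pass, though valid, is not strictly needed. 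The paper's nested approach is closer in spirit to the classical Fekete argument, while yours exploits the full $k$-fold form of the hypothesis more efficiently.
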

When $c_n\equiv 1$, this is a well-known device having many applications, in ergodic theory notably.  
\begin{proof} It is a simple modification of the classical proof of the case $c_n\equiv 1$. Fix an arbitrary positive integer $N$ and
write
$n= j_n N + r_n$ with $1\le r_n\le N$.  Then,
\begin{eqnarray*}
\inf_{n\ge 1}{\p_n\over n }\le {\p_n\over n }&\le& c_{j_n N + r_n}\, {\p_{j_n N}+ \p_{r_n  }\over n } \le c_{j_n N + r_n} \,{\p_{j_n
N}\over j_n N } + c_{j_n N + r_n}\,{\p_{r_n  }\over  n   }
\cr&\le & c_{j_n N + r_n}c_{j_n N }\,{j_n\p_{  N}\over j_n N } + c_{j_n N + r_n}\,{\p_{r_n  }\over  n   } 
\cr &\le &c_{j_n N + r_n}c_{j_n N }\,{ \p_{
N}\over   N } + c_{j_n N + r_n}\big(\max_{r\le N}|\p_r|\big)/  n   . 
\end{eqnarray*} When $n$ tends to
infinity, we have that ${j_n\over n}\to  \frac1N$. As $c_{j_n N + r_n}.c_{j_n N }\to c^2$, we get
 $$
\inf_{n\ge 1}{\p_n\over n }\le \liminf_{n\to \infty}{\p_n\over n }\le \limsup_{n\to \infty}{\p_n\over n }
\le c^2{ \p_{  N}\over   N }  . $$
Since  $N$ was arbitrary, the lemma     is proved.
\end{proof}
 
 \begin{proof}[Proof of Corollary \ref{c3}] Apply this to $\p_J=\log \P\{M(I, n_1,\ldots, n_J)\le z\}$. By Corollary \ref{c2}  and
stationarity,  
\begin{eqnarray} \p_{J+K}&= &\log \P\{\sup_{j\le J+K}\sup_{t\in I} |U(n_j+t)|\le z\}\le {1\over p_{J+K}}\log \P\{ \sup_{t\in I}
|U(t)|\le z\}^{J+K}
\cr &= & {1\over p_{J+K}}\log \Big(\prod_{j\le J }\P\{  \sup_{t\in I} |U(n_j+t)|\le z\} \cdot \prod_{j\le  K}\P\{  \sup_{t\in I}
|U(n_j+t)|\le z\}\Big)
\cr &\le  & {1\over p_{J+K}}\log   \P\Big\{  \sup_{j\le J  }\sup_{t\in I} |U(n_j+t)|\le z\Big\} \P\Big\{ \sup_{j\le  K} \sup_{t\in I}
|U(n_j+t)|\le z\Big\} 
\cr &= & {1\over p_{J+K}}(\p_{J }+\p_{ K})  .
\end{eqnarray}
But   $p_J=\frac{1+e^{-J|I|}}{1-e^{-J|I| }}$.
Similarly, $\p_{J_1+\ldots + J_s}\le {1\over p_{J_1+\ldots + J_s}}(\p_{J_1 }+\ldots +\p_{ J_s})$. 
Thus $\{g_n,n\ge 1\}$ is $\uc$-subadditive with  $\uc=\{p_J, J\ge 1\} $. Now $p_J=\frac{1+e^{-J|I|}}{1-e^{-J|I| }}\to 1$ as $J$ tends to
infinity. By Lemma
\ref{csub}, we deduce that 
\begin{eqnarray*}
\inf_{J\ge 1}{\log \P\{M(I, n_1,\ldots, n_J)\le z\}\over J }&\le &\liminf_{J\to \infty}{\log \P\{ M(I, n_1,\ldots, n_J)\le z\}\over J
}\cr &\le& \limsup_{J\to \infty}{\log \P\{ M(I, n_1,\ldots, n_J)\le z\}\over J }
\cr &\le&  \inf_{J\ge 1}{\log \P\{M(I, n_1,\ldots, n_J)\le z\}\over J } . 
\end{eqnarray*}
\end{proof}
\section{Cyclic   Gaussian Processes}   
  \label{section[cgplh]} As mentionned in Section \ref{sdec}, these processes played   a
key role in  \cite{KLS}. The following lemma, which we state for our need is the crux of the proof of Lemma
\ref{decoupling}. Although it is valid for cyclic stationary Gaussian processes $\{X_t, t\in \R^d\}$ with an arbitrary period
$(b_1,\ldots, b_d)$, we state it in the standard case of period $( 1,\ldots, 1)$, namely with fundamental index   
$\T^d=\R^d/\Z^d=[0,1[^d$.  
\begin{lemma}{\rm (\cite{KLS}, Theorem 3)}\label{cyclic0} Let $\{X_t, t\in \R^d\}$ be a 1-periodic stationary Gaussian process, continuous
in quadratic mean.  Let $V:
\R\to\C$ be measurable and such that $V(X_0)$ is integrable. Then, for all measurable subsets $B$ of   $\T^d$, 
$$ \Big|\E \Big( \exp\Big\{ \int_B V(X_t)\Big\} \Big)\Big|\le \big\|\exp(V(X_0)) \big\|_p^{|B|} $$
where 
$$p = \int_0^1   \frac{\big|\E X_0X_t\big|}{ \E X_0^2 } \dd t.$$
\end{lemma}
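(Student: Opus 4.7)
The plan is to realize the cyclic process as a linear functional of a finite iid Gaussian vector via its spectral expansion, discretize the integral $\int_B V(X_t)\,\dd t$ as a Riemann sum, apply the Gaussian form of the Brascamp--Lieb inequality \eqref{bl} to the resulting product, and pass to the limit.

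Concretely, since $X$ is $1$-periodic and $L^2$-continuous on $\T^d$, its covariance $\Gamma(t)=\E X_0X_t$ has an absolutely convergent Fourier series $\Gamma(t)=\sum_{k\in\Z^d}\sigma_k^2\,e^{2\pi i k\cdot t}$ with $\sigma_k^2\ge 0$ and $\sum_k\sigma_k^2=\Gamma(0)$. Correspondingly $X_t=\sum_k\sigma_k\bigl(\cos(2\pi k\cdot t)\,g_k+\sin(2\pi k\cdot t)\,g'_k\bigr)$ with $\{g_k,g'_k\}$ iid standard Gaussian. Let $X^M$ be the truncation to modes $|k|\le M$; partition $B$ into $N$ cells of equal measure $|B|/N$, pick representative points $t_1,\ldots,t_N$, and set $\sigma:=\sqrt{\Gamma(0)}$. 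Then each $X^M_{t_j}/\sigma$ is a linear form $\langle\tilde a_j,\mathbf g\rangle$ of the finite iid vector $\mathbf g=(g_k,g'_k)_{|k|\le M}$, and the $\tilde a_j$ have Gram matrix $G_{j\ell}=\Gamma(t_j-t_\ell)/\Gamma(0)$.

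Applying \eqref{bl} in its Gaussian form, with common exponent $p_j\equiv q:=Np/|B|$ and functions $f_j(x)=\exp\bigl((|B|/N)V(\sigma x)\bigr)$, requires the quadratic-form condition $q^{-1}\sum_j\tilde a_j\tilde a_j^{\,*}\preceq I$, equivalently $\|G\|_{\rm op}\le q$. Schur's row-sum test gives
\[
\|G\|_{\rm op}\le\max_j\sum_\ell\frac{|\Gamma(t_j-t_\ell)|}{\Gamma(0)}\ \simeq\ \frac{N}{|B|}\int_B\frac{|\Gamma(t-t_j)|}{\Gamma(0)}\,\dd t\ \le\ \frac{N}{|B|}\int_{\T^d}\frac{|\Gamma(s)|}{\Gamma(0)}\,\dd s=\frac{Np}{|B|}=q,
\]
so the condition holds, up to a $1+o(1)$ factor absorbable in the limit. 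Brascamp--Lieb then yields
\[
\E\prod_j f_j(\tilde X^M_{t_j}/\sigma)\ \le\ \prod_j\|f_j(g_1)\|_q=\bigl(\E\exp(pV(\sigma g_1))\bigr)^{|B|/p}=\|\exp V(X_0)\|_p^{|B|},
\]
since $\sigma g_1\stackrel{d}{=}X_0$.

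Finally, as $N\to\infty$ the Riemann sum $(|B|/N)\sum_jV(X^M_{t_j})$ converges in probability to $\int_BV(X^M_t)\,\dd t$ (by continuity in quadratic mean), and $M\to\infty$ recovers $X$; Fatou's lemma transfers the bound through both limits. The principal obstacles I anticipate are making the Brascamp--Lieb matrix condition truly uniform in $N$ (the Schur estimate is only asymptotic, so a vanishing correction in $q$ must be shown harmless after passage to the limit) and handling the merely measurable nature of $V$, addressed routinely by truncation $V\mapsto V\wedge n$ followed by monotone convergence, with the trivial case $\|\exp V(X_0)\|_p=\infty$ treated separately.
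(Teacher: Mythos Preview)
The paper does not itself prove Lemma~\ref{cyclic0}; it is quoted from \cite{KLS} (Theorem~3), and the only indication of the argument is the remark after Lemma~\ref{decoupling} that ``the proof relies on inequality~(\ref{bl})''.  Your sketch follows exactly that strategy---spectral expansion of the periodic process, Riemann discretization of $\int_B V(X_t)\,\dd t$, application of the Brascamp--Lieb inequality to the resulting finite product, then passage to the limit---so you are reconstructing the KLS proof rather than offering an alternative.

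Two points deserve tightening.  First, what you call ``(\ref{bl}) in its Gaussian form'' with the condition $q^{-1}\sum_j\tilde a_j\tilde a_j^{\,*}\preceq I$ is not inequality~(\ref{bl}) as stated (a Lebesgue-measure inequality with the constraint $\sum_j 1/p_j=n$); the Gaussian version you invoke is a genuine consequence but requires either a separate citation or the standard derivation (absorb the Gaussian density into extra rank-one factors, or quote the geometric/entropic form).  Second, your handling of the $o(1)$ drift in the Schur bound is the right idea, but note that it forces you to work with $q'=(1+\eps_N)Np/|B|$, giving on the right $\bigl(\E\exp\{(1+\eps_N)pV(X_0)\}\bigr)^{|B|/((1+\eps_N)p)}$; you then need a short argument (truncate $V$, use monotone convergence, treat the case $\|e^{V(X_0)}\|_p=\infty$ separately) to let $\eps_N\downarrow 0$.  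The Fatou step also needs care since $V$ is $\C$-valued: bound $|\E\exp\{\cdot\}|$ by $\E\exp\{\mathrm{Re}\int_B V(X_t)\,\dd t\}$ first, and apply the argument to $\mathrm{Re}\,V$.  With these adjustments your outline is correct.
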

\begin{proposition} \label{cyclic} Let $Y_t=
\sum_{n=1}^N a_n\big(  g^1_n
\cos 2\pi nt +g^2_n
\sin 2\pi nt
\big)
$,
$t\in
\T=\R/\Z=[0,1[$, where $a_n$ are reals and 
$g^1_n, g^2_n$ are  mutually independent Gaussian standard random variables. Let $s^2=\sum_{n=1}^N a_n^2 $. For $\theta>0$ and
$B\subset\T$ interval of length $|B|$,
 $$ \P\Big\{   \sup_{t\in B  }  |Y_t|   \le \theta  \Big\} 
   \le  
  \Big( e^{ p}   \, \P\Big\{| g|< \frac{\theta}{s}\Big\}  \Big)^{\frac{|B|}{p}}   ,$$
where 
$$p = \int_0^1   \frac{\big|\sum_{n=1}^N a_n^2\cos 2\pi nt\big|}{ \sum_{n=1}^N
a_n^2 } \dd t.$$
\end{proposition}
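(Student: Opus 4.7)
The plan is to mimic the proof of Theorem \ref{supdec}, with Lemma \ref{cyclic0} playing the role of Lemma \ref{decoupling}(b). The process $Y_t$ is manifestly $1$-periodic, continuous (in fact real-analytic) and stationary, with
$$\E Y_0 Y_t = \sum_{n=1}^N a_n^2\cos 2\pi nt, \qquad \E Y_0^2 = s^2,$$
so the decoupling coefficient in Lemma \ref{cyclic0} is exactly the $p$ displayed in the statement, and Lemma \ref{cyclic0} is applicable.

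First I would fix $r>0$ and $\lambda>0$ and apply Lemma \ref{cyclic0} with $V(x)=-\lambda|x|^r$ to get
$$\E\exp\Bigl\{-\lambda\int_B |Y_t|^r\,\dd t\Bigr\}\le \bigl(\E e^{-p\lambda|Y_0|^r}\bigr)^{|B|/p}.$$
Combining this with Markov's inequality applied to the event $\{\int_B|Y_t|^r\,\dd t\le \theta^r|B|\}$ yields, using the notation \eqref{notation},
$$\P\bigl\{\|Y\|_{r,B}\le\theta\bigr\}\le \inf_{\lambda>0}e^{\lambda\theta^r|B|}\bigl(\E e^{-p\lambda|Y_0|^r}\bigr)^{|B|/p}.$$
Since $Y_0\stackrel{\mathcal D}{=} sg$, the choice $\lambda=\theta^{-r}$ gives
$$\P\bigl\{\|Y\|_{r,B}\le\theta\bigr\}\le \Bigl(e^p\,\E e^{-p|g|^r/(\theta/s)^r}\Bigr)^{|B|/p}.$$

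Next I would let $r\to\infty$. Exactly as in \eqref{disap}, the integrand $e^{-p|g|^r/(\theta/s)^r}$ converges almost surely to $\mathbf 1_{\{|g|<\theta/s\}}$ and is bounded by $1$, so dominated convergence yields $\E e^{-p|g|^r/(\theta/s)^r}\to\P\{|g|<\theta/s\}$. On the other hand the sample continuity of $Y$ gives $\|Y\|_{\infty,B}=\lim_{r\to\infty}\|Y\|_{r,B}$ almost surely, so by Fatou's lemma
$$\P\bigl\{\sup_{t\in B}|Y_t|\le\theta\bigr\}\le\liminf_{r\to\infty}\P\bigl\{\|Y\|_{r,B}\le\theta\bigr\}\le \Bigl(e^p\,\P\{|g|<\theta/s\}\Bigr)^{|B|/p}.$$

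There is no real obstacle here: the substantive work was already packaged into Lemma \ref{cyclic0}. The only points requiring minor care are the bookkeeping of the rescaling by $s$ (so that the Gaussian mass on the right-hand side is taken at $\theta/s$) and the justification, by dominated convergence together with sample continuity, that $p$ disappears from the limit while surviving in the exponent on the right-hand side.
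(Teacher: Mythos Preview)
Your proof is correct and follows essentially the same route as the paper's own argument: both mimic the proof of Theorem \ref{supdec}, replacing Lemma \ref{decoupling}(b) by Lemma \ref{cyclic0}, applying Chebyshev with $V(x)=-\lambda|x|^r$, choosing $\lambda=\theta^{-r}$, and letting $r\to\infty$ via \eqref{disap} and dominated convergence. The only bookkeeping point the paper singles out, namely that $Y_0\stackrel{\mathcal D}{=}sg$ forces the rescaling to $\theta/s$, is exactly the one you flagged.
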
 
 \begin{proof}
Notice that $\E Y_sY_t=\sum_{n=1}^N a_n^2\cos 2\pi n(s-t)$. The  proof is very similar
to that  of Theorem
\ref{supdec}, except that we have a different decoupling coefficient: 
$$p= \int_0^1  \frac{|\E Y_0Y_t|}{\E Y_0^2} \dd t= \int_0^1   \frac{\big|\sum_{n=1}^N a_n^2\cos 2\pi nt\big|}{ \sum_{n=1}^N
a_n^2 } \dd t.$$
We only indicate the  necessary changes. The proof is identical with $Y_t$ in place of $X_t$ until  (\ref{int}), where there is a
slight  modification  due to the fact that
$Y_0 = \sum_{n=1}^N a_n   g^1_n\buildrel{\mathcal D}\over {=}sg $,  
($s^2=\sum_{n=1}^N a_n^2 $). Using Tchebycheff's inequality and Lemma \ref{cyclic0}, gives 
\begin{eqnarray} \label{intY}
\P\Big\{     \int_{ B  }  f(Y_t)    \dd t \le \theta  \Big\}
  &\le & \min \Big(e^{\l\theta }\, \E  \exp\Big\{-\l  \int_{ B  }  f(Y_t)    \dd t\Big\}, 1\Big)
\cr &\le & 
 \min\Big(e^{\l \theta}  \big(\E e^{-  p\l    f(sg) }\big)^{\frac{|B|}{p}},1\Big)  .
\end{eqnarray}
Applying this  with $f(x)=|x|^r$,  $\theta= z^r |B|$, $\l= z^{-r}$   gives in exactly the same manner, with the notation  
(\ref{notation}),
  \begin{eqnarray*} 
\P\big\{     \|Y\|_{r, B} \le z  \big\}
  &\le & 
  \big( e^{ p}   \, \E e^{-  p   ( \frac{ s|g| }{z })^r}  \big)^{\frac{|B|}{p}}   .
\end{eqnarray*} 
 Hence,  by using (\ref{disap}),
$$ \P\big\{   \sup_{t\in B  }  |Y_t|   \le  z  \big\}\le \liminf_{r\to \infty} \P\big\{    \|Y\|_{r, B} \le z  \big\}
   \le  
  \big( e^{ p}   \, \P\{|sg|< z\}  \big)^{\frac{|B|}{p}}   .
$$
\end{proof}
An immediate consequence of Proposition \ref{cyclic} is that 
\begin{corollary}   \label{cyclrev} With the notation from Proposition \ref{cyclic}, for $z>0$,
$$\int_0^1    {\big|\sum_{n=1}^N a_n^2\cos 2\pi nt\big|}  \dd t\ge \Big({ \sum_{n=1}^N
a_n^2 }\Big)\ \frac{\log \frac{1}{ \P\{| g|< \frac{z}{s}\} }}{\log \frac{e}{
\P  \{  \|Y\|_{\infty, B}   \le  z 
 \}^{\frac{1}{|B|}}} }.
$$
\end{corollary}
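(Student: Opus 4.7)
The plan is to observe that this corollary is essentially a direct algebraic consequence of Proposition \ref{cyclic}, obtained by taking logarithms and solving for the quantity $p = \int_0^1 |\sum_{n=1}^N a_n^2\cos 2\pi nt|\, \dd t \big/ \sum_{n=1}^N a_n^2$. So the main work is careful bookkeeping of signs, since probabilities lie in $(0,1)$ and hence their logarithms are negative.

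First, I would start from the inequality supplied by Proposition \ref{cyclic}, applied with $\theta = z$:
$$\P\{\|Y\|_{\infty,B}\le z\} \le \big(e^{p}\,\P\{|g|<z/s\}\big)^{|B|/p}.$$
Taking logarithms of both sides (permissible since both are positive) yields
$$\log\P\{\|Y\|_{\infty,B}\le z\} \le |B| + \frac{|B|}{p}\log\P\{|g|<z/s\}.$$
Dividing by $|B|>0$ and rearranging, I obtain
$$1-\frac{1}{|B|}\log\P\{\|Y\|_{\infty,B}\le z\} \ge \frac{1}{p}\log\frac{1}{\P\{|g|<z/s\}},$$
where the direction of the inequality is preserved because $\log\P\{|g|<z/s\}<0$ and I multiplied both sides by $-1$.

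Next, I would rewrite the left-hand side as a single logarithm: since $1 = \log e$ and $\frac{1}{|B|}\log\P\{\|Y\|_{\infty,B}\le z\}=\log \P\{\|Y\|_{\infty,B}\le z\}^{1/|B|}$, it becomes $\log\bigl(e/\P\{\|Y\|_{\infty,B}\le z\}^{1/|B|}\bigr)$. Both sides are now positive (assuming $\P\{\|Y\|_{\infty,B}\le z\}<1$, otherwise the stated bound is trivial), so I can invert to get
$$p \ge \frac{\log\frac{1}{\P\{|g|<z/s\}}}{\log\frac{e}{\P\{\|Y\|_{\infty,B}\le z\}^{1/|B|}}}.$$

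Finally, multiplying both sides by $\sum_{n=1}^N a_n^2$ and recognizing that $p\cdot \sum_{n=1}^N a_n^2=\int_0^1|\sum_{n=1}^N a_n^2\cos 2\pi nt|\,\dd t$ yields the claim. There is no real obstacle here; the only point deserving attention is ensuring that the denominator $\log(e/\P\{\|Y\|_{\infty,B}\le z\}^{1/|B|})$ is strictly positive (which holds since this quantity is at least $1$ because $\P\{\|Y\|_{\infty,B}\le z\}\le 1$), so the division preserves the inequality's direction. The case $\P\{\|Y\|_{\infty,B}\le z\}=1$ (where the right-hand side of the corollary vanishes) is handled separately and trivially.
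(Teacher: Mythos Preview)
Your proof is correct and matches the paper's approach: the paper simply states that the corollary is ``an immediate consequence of Proposition \ref{cyclic}'', and your algebraic manipulation (take logarithms, rearrange, solve for $p$) is precisely the intended derivation. One small slip: in the edge case $\P\{\|Y\|_{\infty,B}\le z\}=1$ the right-hand side of the corollary does \emph{not} vanish (the denominator equals $\log e=1$), but this case needs no separate treatment anyway since, as you correctly observe, the denominator is always at least $1$ and the division step is therefore always valid.
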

 \begin{remark}{\gsec (Littlewood hypothesis)} Let $n_1<n_2<\ldots  $ be integers.  
 Consider the (generalized) Lebesgue    constants
$$\t_N=\int_0^1    
\big|\sum_{k=1}^N
e^{2i \pi n_kt}\big|  
\dd t,  \qq\quad  N=1,2,\ldots$$
  Littlewood hypothesis (\cite {Ol} p.12 for instance) essentially concerns the behavior of Lebesgue
constants of   arbitrary ordered trigonometric systems, and can be formulated as follows: for any increasing sequence of integers,
$$   {\t_N}\ge C {\log N} , $$
where $C>0$ is an absolute constant. This was proved independently by Konyagin \cite{Ko} and McGehee, Pigno and Smith \cite{MPS} in
1981.    Consideration of the Dirichlet kernel shows that the above lower bound is best possible. See \cite{Z} p. 67.
\end{remark}\vskip 3pt
We shall deduce from Corollary \ref{cyclrev} 
 \begin{corollary}\label{cycl} For all positive integers   $N$, all $z>0$ and $B\subset \T$ interval,
$$\t_N\ge N|B|\, \Bigg(\frac{\log \frac{1}{\displaystyle{ \P\{| g|< z\}} }}{\log \frac{e^{|B|}}{
\displaystyle{\P  \Big\{    \displaystyle{ \sup_{t\in B}} 
\frac{1}{\sqrt N}\big|\sum_{1\le k\le N}   (  g^1_{ k}
\cos 2\pi n_kt +g^2_k\sin 2\pi n_kt
 ) 
\big|   \le z  
 \Big\}
} }}\Bigg).
$$
\end{corollary}
\begin{proof}
  Apply Corollary \ref{cyclrev}
 with the choice $a_n= 1/\sqrt N$, if $n=n_k$ for
some $k\le
 N$, and equal to
 $0$ otherwise.   We deduce 
$$\t_N\ge N\ \frac{\log \frac{1}{ \P\{| g|< z\} }}{\log \frac{e}{
\P  \Big\{    \displaystyle{ \sup_{t\in B}} \frac{1}{\sqrt N}
\Big|\sum_{1\le k\le N}   (  g^1_{ k}
\cos 2\pi n_kt +g^2_k\sin 2\pi n_kt
 ) 
\Big|   \le z  
 \Big\}^{\frac{1}{|B|}}} } 
$$
as claimed.\end{proof}

The above link  between $L^1$-norms of   trigonometric sums and Gaussian random trigonometric sums,  seems unexpected. This 
  suggests to examine it  more closely    using   results in
\cite{W1},\cite{W3}. This question will be investigated   elsewhere.    We conclude with a    remarkable example in which Anderson's
inequality is used and Talagrand's well-known lower bound since the corresponding entropy numbers are  very simple.  
 \begin{corollary} There exists an absolute constant $C$ such that for any  set of integers $J$,
$$   \int_0^1    \big|\sum_{n\in  J}   \frac{1}{n^2} \cos   nt\big|   \dd t\ge  
   C\big(\sum_{n\in  J}   \frac{1}{  n^2}\big)^{2} .  $$
\end{corollary}

\begin{proof} Let  
$$X_t=
\sum_{n=1}^\infty   \frac{1}{  n} \big(  g^1_k
\cos nt +g^2_k
\sin nt
\big), \qq Y_t=
\sum_{n\in  J}   \frac{1}{  n} \big(  g^1_n
\cos   nt +g^2_n
\sin   nt \big)
$$
 Then $\E X_s^2=\sum_{n=1}^\infty  \frac{1}{  n^2}= \frac{\pi^2}{6}$ and 
$$   \E X_s X_t= \sum_{n=1}^\infty  \frac{\cos n (s-t)}{  n^2} = 
\frac{3|s-t|^2-6\pi |s-t|+2\pi^2}{  12}.$$
Thus   $d^2(s,t)= \E( X_s- X_t)^2= \pi|s-t| -\frac{1}{2}|s-t|^2\sim  \pi|s-t|$ as $|s-t|\to 0$.  It follows that $N([0,1], d, \e)\asymp
\e^{-2}$.  By using Talagrand's lower bound (see  \cite{[T]}),
$$ \P\Big\{\sup_{0\le t\le 1}|X_t|\le \e\Big\}\ge e^{-K\e^{-2}}. $$
 Now since $Y$ and $X-Y$ are independent,
 by using Anderson's inequality, we get $$ \P\Big\{\sup_{0\le t\le 1}|X_t|\le \e\Big\}\le  \P\Big\{\sup_{0\le t\le 1}|Y_t|\le \e\Big\}. $$
Therefore 
$$\P\Big\{\sup_{0\le t\le 1}|Y_t|\le \e\Big\}\ge e^{-K\e^{-2}}. $$
We have $Y_0=\sum_{n\in  J}   \frac{1}{n}g^1_n$  and   $s(J)=\big(\sum_{n\in  J}   \frac{1}{  n^2}\big)^{1/2} $, 
$$p =   s(J)^{-2}\int_0^1  {\big|\sum_{n\in  J}   \frac{1}{n^2} \cos   nt\big|}  \dd t.$$  
Applying Proposition \ref{cyclic} with $B=[0,1]$ gives, 
    $$ e^{-K\theta^{-2}} \le \P\big\{   \sup_{0\le t\le 1 }  |Y_t|   \le \theta  \big\} 
   \le  
    e   \, \P\big\{| g|< \frac{\theta}{s(J)}\big\} ^{\frac{1}{p}}   ,$$
 By taking logarithms in both sides, we get 
$$  p \ge \frac{ \log \displaystyle{ \frac{1}{\P \{|g|< \frac{\theta}{ s(J)} \} } } } {1+ K\theta^{-2}   } .  $$
Consequently,  
$$   \int_0^1    \big|\sum_{n\in  J}   \frac{1}{n^2} \cos   nt\big|   \dd t\ge  
\frac{s(J)^2\theta^{ 2} } {\theta^{ 2}+ K }\log
 \frac{1}{\P \{| g|<  {\theta}/{ s(J)} \}} .  $$
In particular, if $\theta =s(J)$,  
$$   \int_0^1    \big|\sum_{n\in  J}   \frac{1}{n^2} \cos   nt\big|   \dd t\ge  
C \frac{s(J)^4  } {s(J)^{ 2}+ K   } \ge C\big(\sum_{n\in  J}   \frac{1}{  n^2}\big)^{2} ,  $$
 since $s(J)^{ 2}\le \pi^2/6$. 
\end{proof}

\section{ Stationary sequences with Szeg\"o    spectral type conditions}
 \label{toep.szeg} Let $ (X_1, \ldots, X_n)$  be a Gaussian   vector with associated covariance    matrix (or     Gram matrix)   $\Gamma=
\{\g_{i,j}\}_{1\le i,j\le n}
$. 
  Assume that 
$\Gamma $ is invertible and let 
$\Gamma_j=\Gamma({X_1,\ldots, X_j })$ be the $j$-th principal minor of $\Gamma $. Define $\rho_j= \det(\Gamma_{j-1})/\det(\Gamma_{j })$,
$j=1,\ldots, n $,
$\Gamma_0=1$.   
  By  Gram-Schmidt orthogonalization process  we   obtain from $X_1, \ldots, X_n$ an orthogonal sequence $Y_1, \ldots, Y_n$, which may
be expressed as follows 
 \begin{eqnarray*}Y_j={1\over \sqrt{\Gamma_{j-1}\Gamma_j }}\left|\begin{matrix}
\g_{1,1}       &\g_{2,1}            &\cdots          &\g_{j,1}\cr
\g_{1,2}    &\g_{2,2}      &\cdots           &\g_{j,2}\cr
 \vdots         &   \vdots                  &\ddots      &\vdots \cr
\g_{1,j-1}        &\g_{2,j-1}                   &\cdots      &\g_{j,j-1}\cr
X_1        &X_2                  &\cdots        &X_j
\end{matrix} \right|\qq j=1,\ldots, n.
\end{eqnarray*}
Developing along the last line gives 
$$Y_j=  L(X_1,\ldots, X_{j-1}) + \rho_{j } X_j \qq j=1,\ldots, n   .$$
  From this  and (\ref{ks}),  we easily deduce
the  following  basic estimate: for
$z_j>0$ arbitrary,
\begin{equation}\label{basic} \prod_{j=1}^n \Big(  \int_{-z_j }^{z_j }
e^{-x^2/2}\frac{\dd x}{\sqrt{2\pi}} \Big)\le \P\Big\{ \sup_{j=1}^n \frac{|X_j|}{ z_j}\le 1 \Big\}\le \prod_{j=1}^n \Big( 
\int_{-z_j\sqrt{\rho_j}}^{z_j\sqrt{\rho_j}} e^{-x^2/2}\frac{\dd x}{\sqrt{2\pi}} \Big)
 .
\end{equation} 
  The search of suitable bounds of $\rho_j$ is consequently a fundamental question. There are some special inequalities involving the
Gram determinants $\det(\Gamma_{j })$. For instance (\cite{Ku}, p.382--383),
\begin{equation}\label{kurepa0} \det \Gamma({X_1,\ldots, X_j })\le  \prod_{i=1}^j \|X_i\|_2^2.
\end{equation} 
\begin{equation}\label{kurepa} \det \Gamma({X_1,\ldots, X_j })\le \det \Gamma({X_1,\ldots, X_k })\det \Gamma({X_{k+1},\ldots, X_j }) .
\end{equation} 
Hence, \begin{equation}\label{kurepa1}  \rho_j \ge \frac1{\|X_j\|_2^2} .\end{equation} 

See the upper bound (\ref{kurddp}), see also \cite{D},\cite{E}. If   
$\{X_j, j\in \Z\}$ is a Gaussian stationary   sequence with spectral  function $F $,     it is natural to wonder which
spectral conditions may be imposed on $F$ to get   upper and lower bounds to the probability $\P\big\{ \sup_{j=1}^n |X_j|\le z\big\}$ (or
to its logarithm), which are comparable and remain valid for some range of values of type $0<z\le z_0$, $n\ge n_0$. 
 Let 
\begin{equation}\label{coeff}c_n= \frac{1}{2\pi}\int_{-\pi}^\pi e^{-in\l} F(\dd \l),
\end{equation} 
so that  $\E X_jX_k= c_{j-k}$. The corresponding Hermitian forms are also called the  Toeplitz forms  associated   with $F$, and we have
the representation
$$ T_n=\sum_{\m, \nu=0}^n c_{\nu-\m}u_\m\bar u_\nu= \frac1{2\pi}\int_{-\pi}^\pi \big|u_0+u_1e^{i\l}+u_2e^{2i\l}+\ldots u_ne^{in\l}\big|^2
F(\dd
\l).$$ Recall that $F$ is said of finite type if its range
consists of a finite number of values. In the opposite case, it is called of infinite type.
 The  forms $T_n$ are positive definite unless $F$ is of finite type (\cite{GS}, \S1.11). If $F$ is of infinite type, all determinants of
the forms
$T_n$ are positive, namely $\det \G_n>0$ for all $n$.       
  \begin{theorem} \label{szego} 
Assume
$F$ is of infinite type. Let 
$f$ be the  Radon-Nycodim derivative  of the absolutely continuous part of $F$, and put   
\begin{equation*}  G(f)  =\begin{cases}\exp\Big\{ \frac1{2\pi}\int_{- \pi}^{ \pi}
\log f(t) \dd t\Big\} &\quad {\rm if}\  \log f(t) \ {\rm is\  integrable}\cr 
0 &\quad {\rm otherwise}.
\end{cases}
\end{equation*}
Then for   all
$n
$ and
$z>0$,
$$ \Big( 
\int_{-z }^{z } e^{-x^2/2}\frac{\dd x}{\sqrt{2\pi}} \Big)^n\le   \P\Big\{ \sup_{j=1}^n |X_j|\le z \Big\}\le   \Big( 
\int_{-z/\sqrt{G(f)}}^{z/\sqrt{G(f)}} e^{-x^2/2}\frac{\dd x}{\sqrt{2\pi}} \Big)^n
 .
$$
 \end{theorem}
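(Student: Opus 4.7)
Both inequalities descend from the basic estimate (\ref{basic}) applied to $(X_1,\dots,X_n)$ with the common threshold $z_j \equiv z$ (under the normalization $c_0 = \E X_0^2 = 1$). The lower bound is then immediate. For the upper bound, recall that $\rho_j = \det(\Gamma_{j-1})/\det(\Gamma_j)$, and observe that $\sigma_j^2 := 1/\rho_j$ is precisely the conditional variance of $X_j$ given $X_1,\dots,X_{j-1}$, i.e.\ the squared $L^2$-distance from $X_j$ to the span of its predecessors. By (\ref{basic}) the entire task reduces to the uniform bound
$$\rho_j \le \frac{1}{G(f)}, \qquad \text{equivalently,} \qquad \sigma_j^2 \ge G(f), \qquad j \ge 1.$$

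\textbf{Key step.} Two ingredients combine. First, by stationarity, $\sigma_j^2$ equals the squared distance from $X_0$ to $\mathrm{span}(X_{-1},\dots,X_{-(j-1)})$; since the projection subspace grows with $j$, the sequence $(\sigma_j^2)_{j \ge 1}$ is non-increasing, and the infinite-type assumption on $F$ guarantees $\det(\Gamma_n)>0$ so that $\sigma_j^2 > 0$ throughout. Second, from the factorization $\det(\Gamma_n) = \prod_{j=1}^{n} \sigma_j^2$, the strong Szeg\"o formula (\ref{eig.weyl}) gives
$$\frac{1}{n+1} \sum_{j=1}^{n} \log \sigma_j^2 \;=\; \frac{1}{n+1} \log \det(\Gamma_n) \;\xrightarrow[n \to \infty]{}\; \log G(f).$$
Because $\sigma_j^2$ is monotone, its limit $\sigma_\infty^2 := \lim_j \sigma_j^2$ exists, and a Ces\`aro argument applied to $\log \sigma_j^2$ forces $\log \sigma_\infty^2 = \log G(f)$, i.e.\ $\sigma_\infty^2 = G(f)$. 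Monotonicity then yields $\sigma_j^2 \ge G(f)$ for every $j$, hence $\rho_j \le 1/G(f)$.

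\textbf{Assembly.} Plugging this bound into the upper half of (\ref{basic}),
$$\P\Big\{\sup_{j\le n} |X_j| \le z\Big\} \;\le\; \prod_{j=1}^{n} \int_{-z\sqrt{\rho_j}}^{z\sqrt{\rho_j}} e^{-x^2/2}\,\frac{\dd x}{\sqrt{2\pi}} \;\le\; \Big(\int_{-z/\sqrt{G(f)}}^{z/\sqrt{G(f)}} e^{-x^2/2}\,\frac{\dd x}{\sqrt{2\pi}}\Big)^{n},$$
which is the desired upper bound. The boundary case $\log f \notin L^1$ gives $G(f)=0$, and with the convention $1/0 = \infty$ stated in the preliminaries the estimate is then trivial ($=1$), consistent with the fact that $X$ may be deterministic in that regime.

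\textbf{Expected main obstacle.} The only substantive ingredient is the identification $\lim_j \sigma_j^2 = G(f)$, which is essentially the Kolmogorov--Szeg\"o formula for the one-step prediction error of a stationary Gaussian sequence. The two elements that make it work — monotonicity of $(\sigma_j^2)$ (geometric, from stationarity) and the geometric-mean limit (analytic, from strong Szeg\"o) — both require the infinite-type hypothesis to keep all conditional variances strictly positive so that logarithms and Ces\`aro averaging are legitimate. Once this is in place, the rest of the proof is a mechanical application of (\ref{basic}) and (\ref{ks}).
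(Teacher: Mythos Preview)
Your proof is correct and follows the paper's approach: both reduce everything to the bound $\rho_j\le 1/G(f)$ obtained from the monotone convergence $\det(\Gamma_j)/\det(\Gamma_{j-1})\downarrow G(f)$, then invoke (\ref{basic}). The paper is slightly more direct, citing the variational characterization $\det(\Gamma_j)/\det(\Gamma_{j-1})=\min_p\int|p|^2\,dF$ (minimum over monic polynomials of degree $j-1$) from Grenander--Szeg\"o \S2.2 and \S3.1, which delivers both the monotonicity and the limit at once; your Ces\`aro route through (\ref{eig.weyl}) reaches the same conclusion, but note that (\ref{eig.weyl}) as formulated in the introduction assumes $0<m\le f\le M<\infty$, so in the full generality of the theorem you are really invoking the Kolmogorov--Szeg\"o formula itself---as you acknowledge in your closing paragraph.
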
  
\begin{remark}\label{h2} More explicit  formulations  can be deduced from estimate (\ref{basicsmill}). The quantity $\exp\big\{
\frac1{2\pi}\int_{-
\pi}^{
\pi}
\log f(t) \dd t\big\}$ is by definition   the geometric mean of $f$. The condition that $\log f$ be   integrable is satisfied by a
remarkable class of functions. Let $u(z)=
\sum_{n=0}^\infty c_n z^n$ be an analytic function, regular in the open unit disk
$|z|<1$  and belonging to  
$  H_2$, namely the integral 
$$ \frac1{2\pi} \int_{-\pi}^\pi  |u(re^{it})|^2 \dd t$$
is bounded for every $r<1$. This is equivalent to the fact that $\sum_{n=0}^\infty |c_n|^2<\infty$.  Then the limit
$$\lim_{r\to 1-0}    u(re^{it})= h(t)$$
exists for almost every $t$. Let $f(t)= |h(t)|$. We   furthermore have that $\log f$ is  (Lebesgue) integrable  and (see  \cite{GS}, 
\S1.13),
$$\frac1{2\pi} \int_{-\pi}^\pi  |\log |f( e^{it})|| \dd t\le  \sum_{n=0}^\infty |c_n|^2.$$
 \end{remark}
 \begin{proof} According to (\ref{ks}), only the second  inequality has to be proven. We
have the explicit formula 
 \begin{equation} 
\frac1{\rho_j}=\frac{\det(\Gamma_{j })}{\det(\Gamma_{j -1})}= \min_{p}\int|p(e^{i\l} |^2 \m(\dd \l) ,
\end{equation}   
where the minimum is taken over all polynomials $p$ of degree $j -1$, of type $a_0+ a_1z +\ldots + a_jz^{j -1} $ with $|a_{j -1}|=1$. 
See (\cite{GS} \S 3.1.a  and \S 2.2.a). Further, when $j$ tends to infinity, these minima are decreasing and in fact
$$ \frac{\det(\Gamma_{j })}{\det(\Gamma_{j -1})}\  \downarrow \ \exp\Big\{ \frac1{2\pi}\int_{- \pi}^{ \pi}
\log f(t) \dd t\Big\} . $$
 Consequently, by (\ref{basic}) and monotonicity, 
\begin{equation}\label{basic1}   \P\Big\{ \sup_{j=1}^n |X_j|\le z \Big\}\le   \Big( 
\int_{-z/\sqrt{G(f)}}^{z/\sqrt{G(f)}} e^{-x^2/2}\frac{\dd x}{\sqrt{2\pi}} \Big)^n
 .
\end{equation} 
\end{proof}
  \begin{remark}  A direct use of (\ref{eig.weyl}) would have provided  a less precise result. Much later, Szeg\"o  also showed   that a
rate of convergence can be associated to (\ref{eig.weyl})  in presence of reasonable smoothness assumptions. Suppose that
$f$ has a derivative which satisfies a Lipschitz condition of order $\a$, $0<\a<1$. Then,
\begin{equation}\label{sweyl1}\lim_{n\to \infty}\Big[\log \det \Gamma_n -\frac{n+1}{2\pi}\int_{-
\pi}^{
\pi} \log f(t) 
\dd t \Big]= \frac1{\pi}\int\int |h(z)|^2\dd \s,
\end{equation}
where the function $h(z)$ is analytic in $z$ and is defined by the equality
$$ h(z)= \frac1{4\pi}\int_{-\pi}^\pi \log f(\l)\frac{1+ze^{-i\l}}{1-ze^{-i\l}} \dd \l,$$
and the integration in the right-handside in (\ref{sweyl1}) is along the unit circle. See   \cite{Lib} for some generalization.
\end{remark}
 \begin{example} Assume that the spectral density exists,
   $ f(t)  =a_0+   \sum_{n\in \Z*} a_n  e^{int}$, $a_{-n}=a_{|n|}$,  and
\begin{equation}\label{spcoecond} \sum_{n\in \Z*} |a_n| < |a_0| .
\end{equation}
Let $m$ and $M$ denote the essential 
lower and upper bound $f$ respectively.  Then $m>0$. The conclusion of Theorem \ref{szego} holds. The link between a  (square
integrable) spectral function and its corresponding correlation function being  given by
\begin{equation}\label{linkcorspf} f(t)  =\E X^2_0+    \sum_{n\in \Z*} (\E X_0X_{|n|}) e^{int},
\end{equation}
this holds in particular for  the Ornstein-Uhlenbeck sequence $\{U(n), n\ge 0\}$. Indeed, in this case $f(t)=1+   \sum_{n\in \Z*}
e^{-|n|/2}  e^{int}$.
\end{example}
\begin{remark}In the lacunary case $ f(t)  =a_0+   \sum_{k\in \Z*} a_k  e^{in_kt}$, $n_k= \l^k$, $\l>1$,   $a_{-k}=a_{|k|}$, condition $ 
m>0$    is   equivalent to  (\ref{spcoecond}), since the maxima of the polynomials
$\sum_{|k|\le N} a_k  e^{in_kt}$ verify  
$$ \sup_{-\pi\le t\le \pi}\Big|\sum_{|k|\le N} a_k  e^{in_kt}\Big|=  \sum_{|k|\le N} |a_k| .
 $$ 
  This follows from a well-known 
theorem of Sidon.
\end{remark}

\begin{example}
Let $b>0$, and consider again    $Y_j =  U(jb)-U((j-1)b) ,    j=1,2,\ldots  $. We   compute
  the corresponding  geometric mean.  Recall that 
 \begin{equation}\E Y_\ell Y_{\ell+u}=\begin{cases}2 (1-  e^{- b/2}) \quad  & {\rm if}\ u=0,\cr 
 (2  -e^{ b/2} -e^{- b/2})e^{-ub/2} \quad  & {\rm if}\ u\ge 1.
\end{cases}\end{equation} 
   Let $r=e^{-b/2} $. Then $ \d:= 2  -e^{ b/2}-e^{- b/2}=-\frac{(1-r)^2}{r} \sim    - {b^2 \over 4}  + \mathcal
O(b^3) $, as $b\to 0$. Now introduce the Poisson kernel
\begin{equation}\label{poisson} g(t)= \sum_{n\in \Z} r^{|n|} e^{int}= \frac{1-r^2}{1-2r\cos x +r^2}, \qq 0<r<1.
\end{equation}
  It is well-known that $\log g(t)$  is integrable. Further, 
\begin{equation}\label{poisson1}\int_0^{\pi} \log (1-2r\cos x +r^2)\dd x =0,\quad \qq (=\pi \log r^2\ {\rm if}\ r>1).
\end{equation} 
 The  spectral function, call it $h(t)$, verifies  
\begin{eqnarray}\label{poisson2}h(t)&=&   2(1-r)+ 
 \d \sum_{u\in\Z^*}r^{|u|} e^{iut} 
= 2(1-r)-\d + \d
\sum_{u\in \Z }r^{|n|} e^{iut}
  \cr &=&2(1-r)+  \frac{(1-r)^2}{  r} -\frac{(1-r)^2}{  r}  \frac{ (1-r^2)}{1-2r\cos x +r^2}
 \cr &=&   \frac{ 1-r ^2}{r}\Big(1 -   \frac{ (1-r)^2}{1-2r\cos x +r^2}\Big)
 \cr &=&\frac{2(1-r^2)(1-\cos x)}{1-2r\cos x +r^2} . 
\end{eqnarray} 
 
We have from (\ref{poisson1})
\begin{eqnarray*}\int_{-\pi}^{\pi} \log h(t) \dd t&=& \int_{-\pi}^{\pi} \log \big(2(1-r^2)(1-\cos x)\big) \dd t
\cr &=& 2\pi \log [2(1-r^2)]+
\int_{-\pi}^{\pi} \log
 (2\sin^2 \frac{t}{2}) \dd t
\cr &=& 2\pi \log [4(1-r^2)]+
4\int_{0}^{\pi} \log
  \sin  \frac{t}{2}  \dd t. 
\end{eqnarray*}
But $\int_{0 }^{\pi} \log
  \sin  \frac{t}{2}  \dd t =-\pi \log 2$. Therefore
$$\frac1{2\pi}\int_{-\pi}^{\pi} \log h(t) \dd t=\log [2(1-r^2)]+\log 2- 2\log 2=\log  (1-r^2).
$$
 
Thus $ G(h)=1-r^2= 1-e^{-b}$ and by Theorem \ref{szego},
\begin{equation}\label{uhlenb}  \P\Big\{ \sup_{j=1}^n \big| U(jb)-U((j-1)b)\big|  \le z \Big\}\le   \Big( 
\int_{-\frac{z}{\sqrt{1-e^{-b}}}}^{\frac{z}{\sqrt{1-e^{-b}}}} e^{-x^2/2}\frac{\dd x}{\sqrt{2\pi}} \Big)^n
 .
\end{equation} 
\end{example} 
\vskip 2pt 
 \vskip 5pt 
 More generally, let $\xi(t), t\ge 0$ be a Gaussian  stationary process  and $b$ being a positive real, let
 $\xi_b(j)= \xi((j+1)b)-\xi(jb), \ j=0,1, \ldots  $
Let also     
$\g(h)= \E \xi(0)\xi(h)$, $\g(0)=1$, $\s(h)= \sqrt{2(1-\g(h)}$. \begin{proposition}
Assume that $\g(h)$ is convex decreasing and let $f= -\g'$. Then $f(t)=  2\sin(\frac{t}{2}) \,g(t)$ where $$g(t)\asymp  
\frac{\s^2(b)+\s^2(2b)+\ldots +
\s^2((m-1)b)}{m} +  \s^2(mb)    $$ as $t\to +0$, and we write $m=\lfloor \frac{\pi}{|t|} \rfloor$ for brevity. Further $\log f$ is
integrable.  
\end{proposition}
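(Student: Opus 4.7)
\smallskip

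The plan rests on the fact that under the convexity of $\gamma$, the function $f = -\gamma'$ is non-negative and non-increasing on $(0,\infty)$, while the quantities $\sigma^2(jb) = 2\int_0^{jb} f(u)\,du$ are just the running integrals of $f$. The factorization $f(t) = 2\sin(t/2)\,g(t)$ is merely a definition of $g$ on $(0,\pi]$; all the substance lies in the asymptotic for $g$ and in showing $\log f \in L^1$.

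First, I would record the bridge between $f$ and the $\sigma^2(jb)$. From $\sigma^2(h) = 2(1-\gamma(h)) = 2\int_0^h f$, subtraction gives
\[
\sigma^2(jb) - \sigma^2((j-1)b) \;=\; 2\int_{(j-1)b}^{jb} f(u)\,du,
\]
and the monotonicity of $f$ sandwiches this quantity between $2b\,f(jb)$ and $2b\,f((j-1)b)$. This converts local values of $f$ into finite differences of $\sigma^2$, and vice versa.

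Next, to analyse $g(t) = f(t)/(2\sin(t/2))$ as $t \to 0^+$, I would introduce the scale $m = \lfloor \pi/t \rfloor$, so that $2\sin(t/2) \asymp 1/m$ and therefore $g(t) \asymp m\,f(t)$. By monotonicity, $f(t)$ is comparable, on the one hand, to the average value of $f$ on the long interval $[0,(m-1)b]$, which equals $\sigma^2((m-1)b) / (2(m-1)b)$, and on the other hand must be corrected by a tail term $f(mb) \asymp \sigma^2(mb)/(mb)$ capturing what happens beyond the horizon. Abel summation applied to the increments $\sigma^2(jb) - \sigma^2((j-1)b)$ reorganizes the bulk average into the Cesàro mean $\tfrac{1}{m}\sum_{j<m} \sigma^2(jb)$. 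Combining and absorbing the fixed factor $1/b$ into the $\asymp$ constants yields the announced equivalence.

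Finally, for $\log f \in L^1$, I would take logarithms in $f(t) = 2\sin(t/2)\,g(t)$:
\[
\log f(t) \;=\; \log\bigl(2\sin(t/2)\bigr) + \log g(t).
\]
The first summand is integrable on $(-\pi,\pi)$ by the classical identity $\int_0^\pi \log(2\sin(t/2))\,dt = 0$. The asymptotic for $g$, combined with the uniform bounds $0 < \sigma^2(b) \le \sigma^2(jb) \le 2$, keeps $g$ between two positive constants in a neighbourhood of $0$, so $\log g$ is bounded there; away from $0$, the positivity and continuity of $f = -\gamma'$ (from strict convex decrease of $\gamma$) handle the remainder.

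The main obstacle is the precise two-sided comparison $g(t) \asymp \tfrac{1}{m}\sum_{j=1}^{m-1} \sigma^2(jb) + \sigma^2(mb)$: matching the pointwise value $f(t)$ to an averaged-plus-tail functional of $\sigma^2$ requires coordinating the monotonicity sandwich with the Abel reorganization and treating carefully the transition at $j \approx m$, where the bulk average and tail contribution meet. Once this asymptotic is established, the integrability of $\log f$ is a short computation.
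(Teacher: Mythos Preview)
You have misread what $f$ is. In this proposition $f$ denotes the \emph{spectral density of the increment sequence} $\xi_b(j)=\xi((j{+}1)b)-\xi(jb)$, not the function $-\gamma'$ on $(0,\infty)$. (The phrase ``let $f=-\gamma'$'' in the statement is misleading; the context is Section~\ref{toep.szeg}, where everything hinges on the geometric mean $G(f)$ of a spectral density, and the proof opens by writing $f(t)=\E\xi_b(0)^2+\sum_{n\in\Z^*}(\E\xi_b(0)\xi_b(|n|))e^{int}$.) With your reading, $g(t)=(-\gamma'(t))/(2\sin(t/2))\asymp m\cdot(-\gamma'(0^+))$ would blow up like $1/t$ as $t\to0^+$, contradicting the stated bounded two-sided asymptotic; your ``monotonicity sandwich'' comparing $f(t)$ to an average of $f$ over $[0,(m-1)b]$ also goes the wrong way, since a nonincreasing $f$ at a small argument dominates, rather than matches, its long-range average.

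The paper's argument is genuinely Fourier-analytic. Setting $\Delta_n=\sigma^2(nb)-\sigma^2((n-1)b)$, the covariances of the increments are $\frac12(\Delta_{n+1}-\Delta_n)$, so the spectral density is the cosine series $f(t)=\sum_{n\ge0}(\Delta_{n+1}-\Delta_n)\cos nt$. One Abel summation, using $\cos jt-\cos(j+1)t=2\sin(t/2)\sin((2j+1)t/2)$, factors this as $f(t)=2\sin(t/2)\,g(t)$ with $g(t)=\sum_{n\ge1}\Delta_n\sin((2n-1)t/2)$. Convexity of $\gamma$ makes $\sigma^2$ concave, hence $\Delta_1\ge\Delta_2\ge\cdots\ge0$, and the crucial step you are missing is the Salem--Telyakovskii estimate for sine series with monotone coefficients: $g(t)\asymp t\sum_{k\le m}k\Delta_k$ with $m=\lfloor\pi/|t|\rfloor$. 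A second Abel summation on $\sum k\Delta_k$ converts this into the displayed expression in $\sigma^2(jb)$. The integrability of $\log f$ then follows because $g$ stays bounded between positive constants near $0$, so $f(t)\asymp\sin(t/2)$ there. None of your steps survive once $f$ is correctly identified; the heart of the matter is the classical asymptotic for monotone-coefficient sine series, not a pointwise comparison of $-\gamma'$.
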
  
\begin{proof}  
   Set $\D_n=\s^2(nb)-\s^2((n-1)b),\  n=1,2,\ldots$,   $ \D_0=0$. Then
$$\E \xi_b(0)\xi_b(n)=\frac1{2} \big\{-2\s^2(nb)+\s^2((n-1)b)+\s^2((n+1)b) \big\}=\frac1{2} \big\{\D_{n+1}-\D_n 
\big\},$$
and 
 $f(t)  =\E \xi_b(0)+    \sum_{n\in \Z*} (\E \xi_b(0)\xi_b(|n|)) e^{int}= \sum_{n=0}^\infty (\D_{n+1}-\D_n)\cos nt$.  
 By using Abel summation and the relation $\cos jt-\cos(j+1)t=2\sin\frac{t}{2}\, \sin\frac{(2j+1)t}{2} $, $f(t)$ can be rewritten
as  $f(t)= 2\sin\frac{t}{2}\, g(t)$ where  
$$g(t)= \D_1\sin\frac{3t}{2}+\D_2\sin\frac{5t}{2}+ \ldots $$
As $\s^2$ is concave increasing, it follows that   $\D_1\ge \D_2\ge \ldots$ The behavior of sine
series  with   non increasing coefficients were studied by Salem. We refer to       Popov's article \cite{Po} for instance, for
the result below (Telyakovskii's estimate) and recent sharpenings,
$$g(t)\asymp t \sum_{k=1}^{\lfloor \frac{\pi}{|t|} \rfloor}k\D_k  
\qq \quad t\to +0  .$$
The constants involved in the symbol $\asymp$ are absolute. By using again Abel summation, 
$$ \sum_{k=1}^{m}k\D_k = \s^2(b)+\s^2(2b)+\ldots + \s^2((m-1)b) + m\s^2(mb).$$
Letting $m=\lfloor \frac{\pi}{|t|} \rfloor$, we deduce 
 $$g(t)\asymp   \frac{\s^2(b)+\s^2(2b)+\ldots + \s^2((m-1)b)}{m} +  \s^2(mb)  ,$$
as $t\to +0$. Therefore 
$$f(t)\asymp  \sin\frac{t}{2}\qq \quad t\to +0  . $$
It follows that $\log f$ is integrable, as
claimed. 
\end{proof} 
\begin{remark} Assume   $f$ be integrable, $f\not\equiv 0$. The condition that $\log f$ be integrable characterizes  the fact that
there exists an
 analytic function $h(z)$ of the class $H_2$ such that $f(t)= |h(z)|^2$, $z=e^{it}$.
This is well-known extension of Fej\'er-Riesz's representation theorem     for non negative trigonometric polynomials.   
   It also characterizes the property that $\{ X_j, j\in \Z\}$ be non-deterministic.
\end{remark}

We conclude this section with an abstract and less handable reformulation of Theorem \ref{szego}. Recall that
$\Gamma_j=\Gamma({X_1,\ldots, X_j })$. Let
$E_k$ be the  subspace of
$L^2$ linearly generated by
$X_1,
\ldots, X_{j-1}$ and put
  $$ \t_j= \| X_j- E_{j-1}  \|  ,$$
namely the distance from $X_j$ to $ E_{j-1} $. 
\begin{proposition} i) Let $(X_1, \ldots, X_n)$ be Gaussian with invertible covariance matrix. Then
$$\P\Big\{ \sup_{j=1}^n \frac{|X_j|}{ z_j}\le 1 \Big\}\le \prod_{j=1}^n \Big( 
\int_{-\frac{z_j}{\t_j}}^{ \frac{z_j}{\t_j}} e^{-x^2/2}\frac{\dd x}{\sqrt{2\pi}} \Big)
 .$$
ii) Let $\{X_j, j\in \Z\}$ be a Gaussian stationary
 seqence having an absolutely continuous spectrum, with spectral density function $f$.  Then  $\t_j^{-2}=\sum_{k=0}^j|\p_k(0)|^2$ where $\{\p_k, k\in \Z\}$
is  the orthonormal sequence of   polynomials associated to the weight function $f(x) $.  
\end{proposition}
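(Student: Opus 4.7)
The plan is to treat the two parts separately. For part (i), the starting point is the upper bound in the basic estimate~(\ref{basic}):
\[
\P\Bigl\{\sup_{j=1}^n |X_j|/z_j \le 1\Bigr\}\le \prod_{j=1}^n \int_{-z_j\sqrt{\rho_j}}^{z_j\sqrt{\rho_j}} e^{-x^2/2}\,\frac{\dd x}{\sqrt{2\pi}},
\]
with $\rho_j=\det(\Gamma_{j-1})/\det(\Gamma_j)$. It then suffices to identify $\rho_j=1/\t_j^2$. This is the classical Gram--Schmidt determinantal identity: the orthogonalized vector $Y_j$ already written in the paper has squared norm $\det(\Gamma_j)/\det(\Gamma_{j-1})$, and this squared norm is by construction the squared distance from $X_j$ to $E_{j-1}$, namely $\t_j^2$. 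Substituting $\sqrt{\rho_j}=1/\t_j$ into (\ref{basic}) yields part~(i).

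For part (ii), the plan is to pass to the frequency domain via the Kolmogorov isometry: the correspondence $X_j\leftrightarrow e^{ij\lambda}$ extends to a unitary map from the closed linear span of $(X_j)_{j\in\Z}$ in $L^2(\P)$ onto $L^2(\T,f\,\dd\lambda/(2\pi))$. Under this identification, $\t_j^2$ becomes the squared $L^2(f\,\dd\lambda/(2\pi))$-distance from $e^{ij\lambda}$ to $\mathrm{span}(1,e^{i\lambda},\ldots,e^{i(j-1)\lambda})$. Factoring out $e^{ij\lambda}$ (which has modulus one) and using the symmetry $f(\lambda)=f(-\lambda)$, I would rewrite
\[
\t_j^2=\min\Bigl\{\frac1{2\pi}\int_{-\pi}^\pi |P(e^{i\lambda})|^2 f(\lambda)\,\dd\lambda:\ P\in\C[z],\ \deg P\le j,\ P(0)=1\Bigr\}.
\]

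The concluding step is to invoke the Christoffel function attached to the Szeg\"o orthonormal polynomials $\{\p_k\}$ associated with the weight $f$ on $\T$. Their degree-$j$ reproducing kernel $K_j(z,w)=\sum_{k=0}^j \overline{\p_k(w)}\,\p_k(z)$ satisfies the classical extremal identity
\[
\frac{1}{K_j(w,w)}=\min\Bigl\{\frac1{2\pi}\int_{-\pi}^\pi |P(e^{i\lambda})|^2 f(\lambda)\,\dd\lambda:\ \deg P\le j,\ P(w)=1\Bigr\}.
\]
Specializing $w=0$ gives $\t_j^{-2}=K_j(0,0)=\sum_{k=0}^j |\p_k(0)|^2$, which is the claim.

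I expect the main obstacle to be a bookkeeping one in part~(ii): verifying, via the time-frequency isometry, that the natural polynomial extremal problem for $\t_j$ really translates into a constraint of the form $P(0)=1$, rather than the more familiar constraint on the leading coefficient (which is the setting of the other Szeg\"o extremal problem invoked in the proof of Theorem~\ref{szego}). Once this translation is carefully pinned down, the reproducing kernel property of the Szeg\"o polynomials is classical (see Grenander--Szeg\"o) and produces the result at once.
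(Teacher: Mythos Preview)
Your proof is correct and follows the same route as the paper: for (i) the paper likewise invokes the Gram determinant identity $\det(\Gamma_j)=\det(\Gamma_{j-1})\,\t_j^2$ (citing Achieser--Glasman) to obtain $\sqrt{\rho_j}=1/\t_j$ in (\ref{basic}), and for (ii) it simply cites the Christoffel-function formula in Grenander--Szeg\"o (formula (10), p.~40), which is exactly the reproducing-kernel extremal identity you spelled out. One harmless slip of wording: the $Y_j$ displayed in the paper is already normalized to unit length, so it is the \emph{unnormalized} Gram--Schmidt vector $X_j-\mathrm{proj}_{E_{j-1}}X_j$ whose squared norm equals $\det(\Gamma_j)/\det(\Gamma_{j-1})=\t_j^2$; this does not affect your argument.
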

\begin{proof} First notice that 
\begin{equation}  \Gamma_j= \Gamma_{j-1} \t_j^2.\end{equation} 
 For a reference, see \cite{AG} p.13. According to (\ref{basic}),
$$\P\Big\{ \sup_{j=1}^n \frac{|X_j|}{ z_j}\le 1 \Big\}\le \prod_{j=1}^n \Big( 
\int_{-\frac{z_j}{\t_j}}^{ \frac{z_j}{\t_j}} e^{-x^2/2}\frac{\dd x}{\sqrt{2\pi}} \Big)
 .
$$ 
As to b), this follows from \cite{GS}, (10) p.40.\end{proof}
      
See
also  \cite{Lif0}, Proposition 3, Section 3 where a more complicated proof   is given.   For applications of strong Szeg\"o limit
theorems to linear prediction of stationary processes, we refer   to  Chapter 10 of
\cite{GS}, which is entirely devoted to this question.  
    

\section{Ger\v sgorin's Disks and Matrices with Dominant
Principal Diagonal} \label{section[ddp]}
  In this part, we are rather concerned with the non-stationary case.   For an important class of matrices   the parameters  $\rho_j$ in
(\ref{basic}) turn  up to be easily controlable.  An
$n\times n$ matrix
$A=\{ a_{i,j}, 1\le i,j\le n\}$   has dominant principal diagonal if 
\begin{equation}\label{ddp}|a_{i,i}|>\sum_{j=1\atop 
j\not =i}^n |a_{i,j }|, \qq \quad i=1,2,\ldots, n.
\end{equation} This notion already appeared in Minkowski  and Hadamard works (see the overview in \cite{Ta}). Matrices with dominant
principal diagonal define a quite remarkable class: they are invertible and their determinants are   easy to estimate. 
Put for  $i=1,2,\ldots, n$,
 $$ A_i=\sum_{j=1\atop 
j\not =i}^n |a_{i,j }|,\qq m_i=|a_{i,i}|- A_i,\qq M_i=|a_{i,i}|+ A_i. $$
The following basic estimate is due to Price (\cite{P}, Theorem 1), the lower bound being previously proved by Ostrowski in \cite{O},
(see also    \cite{B},\cite{FV},\cite{H},\cite{Ky} for various refinements).
\begin{equation} \label{price}0<m_1 \ldots m_n\le |\det(A)|\le M_1 \ldots M_n .
\end{equation} 
 
If $A$ is a Gram matrix, it follows from this and inequality (\ref{kurepa1}) that 
   \begin{equation}\label{kurddp}   \rho_j 
 \le  \frac{  (1+\tau)^{j-1}}{  a_{j,j}}, \qq\quad {\rm where}  \quad \tau= \max_i \frac{A_i}{a_{i,i}}<1.
   \end{equation} 
Then by (\ref{kurddp}) and (\ref{basic}), 
\begin{equation}\label{kurddp1}    \P\Big\{ \sup_{j=1}^n \frac{|X_j|}{ z_j}\le 1 \Big\}\le \prod_{j=1}^n \Big( 
\int_{ -  \frac{z_j   }{ \sqrt{ a_{j,j}}}(1+\tau)^{\frac{j-1}{2}}} ^{   \frac{z_j   }{ \sqrt{ a_{j,j}}}(1+\tau)^{\frac{j-1}{2}}} 
e^{-x^2/2}\frac{\dd x}{\sqrt{2\pi}} \Big)
 .
 \end{equation} 
This can be however improved. 
\begin{proposition} \label{basicddpp}   Let $(X_1,\ldots, X_n)$ be a Gaussian vector  and assume that for some $r<1$,
\begin{equation}\label{basicddp}     \sum_{j=1\atop j\not = i}^n|\E X_iX_j|\le r \, \E X_i^2, \qq\quad i=1,\ldots, n.  \end{equation} 
Then, 
$$   \P\Big\{ \sup_{j=1}^n  |X_j|     \le z \Big\}\le  \prod_{j=1}^n\P\Big\{  |X_j|     \le  \frac{z   }{\sqrt {1-r}} \Big\}  
 .
$$ 
\end{proposition}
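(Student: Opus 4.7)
The plan is to reduce everything to the single matrix inequality
\[
\Gamma_n \succeq (1-r)\, D \qquad \text{(Loewner order),}
\]
where $\Gamma_n$ is the covariance matrix of $(X_1,\ldots,X_n)$ and $D=\mathrm{diag}(\E X_1^2,\ldots,\E X_n^2)$. Once this is in hand, the conclusion follows either from (\ref{basic}) (via $\rho_j \le 1/((1-r)\E X_j^2)$) or, more cleanly, from Anderson's inequality.

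To establish $\Gamma_n \succeq (1-r)D$, I would pick any $v\in\R^n$ and estimate the off-diagonal part of the quadratic form. The standard AM--GM symmetrization $2|v_iv_k|\le v_i^2+v_k^2$, together with the symmetry $|\E X_iX_k|=|\E X_kX_i|$, gives
\[
\Bigl|\sum_{i\neq k} v_iv_k\,\E X_iX_k\Bigr|
\;\le\; \sum_{i\neq k}\frac{v_i^2+v_k^2}{2}\,|\E X_iX_k|
\;=\; \sum_i v_i^2 \sum_{k\neq i}|\E X_iX_k|
\;\le\; r \sum_i v_i^2\,\E X_i^2,
\]
where the last step is exactly the hypothesis (\ref{basicddp}). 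Since $v^T\Gamma_n v$ equals $v^T Dv$ plus this off-diagonal sum, we conclude $v^T\Gamma_n v \ge (1-r)\,v^TDv$, which is the claimed inequality.

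For the finishing step, I would use Anderson's inequality. Because $\Gamma_n - (1-r)D\succeq 0$ is itself a covariance matrix, one may realize $(X_1,\ldots,X_n)\stackrel{d}{=} Y+Z$ with $Y$ and $Z$ independent centered Gaussian vectors of covariances $(1-r)D$ and $\Gamma_n-(1-r)D$ respectively. The box $C=\{x\in\R^n:|x_j|\le z,\ 1\le j\le n\}$ is symmetric and convex, so Anderson's inequality yields $\P(X\in C)\le\P(Y\in C)$; the right-hand side factorizes because $Y$ has independent coordinates $Y_j\sim N(0,(1-r)\E X_j^2)$, giving
\[
\P\bigl\{\sup_{1\le j\le n}|X_j|\le z\bigr\}
\;\le\; \prod_{j=1}^n \P\bigl\{|Y_j|\le z\bigr\}
\;=\; \prod_{j=1}^n \P\bigl\{|X_j|\le z/\sqrt{1-r}\bigr\},
\]
which is precisely the proposition. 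The only non-routine step is the Loewner inequality $\Gamma_n\succeq (1-r)D$; I do not anticipate any real obstacle once the AM--GM symmetrization trick for converting row-wise diagonal dominance of a symmetric matrix into a quadratic-form lower bound is recognized.
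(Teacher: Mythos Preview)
Your proof is correct and follows essentially the same route as the paper. The paper isolates your AM--GM symmetrization step as a separate lemma (Lemma~\ref{quadraf}) and then applies Anderson's inequality exactly as you do; the only cosmetic difference is that the paper introduces a parameter $\alpha<1-r$, compares $\uX$ with $\uY=(\sqrt{\alpha\,a_{i,i}}\,g_i)_i$, and lets $\alpha\uparrow 1-r$ at the end, whereas you work directly at $\alpha=1-r$ (which is fine since Anderson's inequality does not require the residual covariance to be strictly positive definite).
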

Our result much improves Theorem 2.2  in \cite{LS2} where only a bound of $\sup_{j=1}^n   X_j $ is given under similar assumptions
(assumption (2.4) has to be modified). 
The proof uses the following general   estimate  for quadratic forms
\begin{lemma} \label{quadraf}
$$ \sum_{ i=1}^n x_i^2\Big(a_{i,i}+\sum_{j=1\atop 
j\not =i}^n |a_{i,j }|\Big)\ge \sum_{ i,j=1}^n x_ix_ja_{i,j}\ge \sum_{ i=1}^n x_i^2\Big(a_{i,i}-\sum_{j=1\atop 
j\not =i}^n |a_{i,j }|\Big).
$$\end{lemma}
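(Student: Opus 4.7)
The plan is to split the quadratic form into diagonal and off-diagonal parts,
$$\sum_{i,j=1}^n x_i x_j a_{i,j} \ = \ \sum_{i=1}^n a_{i,i} x_i^2 \ + \ \sum_{i\neq j} a_{i,j} x_i x_j,$$
and to control the off-diagonal remainder termwise by the elementary bound $|x_i x_j|\le \tfrac{1}{2}(x_i^2+x_j^2)$. This is essentially the classical argument behind Ger\v sgorin's circle theorem, which is exactly the thread running through Section \ref{section[ddp]}.

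Applying this AM-GM step to each off-diagonal coefficient yields
$$\Bigl|\sum_{i\neq j} a_{i,j}\,x_i x_j\Bigr|\ \le\ \frac{1}{2}\sum_{i\neq j}|a_{i,j}|\,(x_i^2+x_j^2).$$
In the situations where the lemma will be used, the matrix $(a_{i,j})$ is a Gram (covariance) matrix and is therefore symmetric, so $|a_{i,j}|=|a_{j,i}|$. A relabeling of summation indices then gives $\sum_{i\neq j}|a_{i,j}|x_j^2=\sum_{i\neq j}|a_{i,j}|x_i^2=\sum_i x_i^2 A_i$, so that the right-hand side above collapses to $\sum_i x_i^2 A_i$. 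Adding and subtracting this estimate from the diagonal contribution $\sum_i a_{i,i}x_i^2$ produces both inequalities of the lemma simultaneously, the upper bound corresponding to the $+$ sign and the lower bound to the $-$ sign.

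The only subtlety I foresee is the implicit symmetry assumption: without $|a_{i,j}|=|a_{j,i}|$, the row- and column-sums of absolute values differ, and a bound expressed purely through the row-sums $A_i$ would not follow. I would handle this either by stating the lemma for Hermitian (in particular symmetric) matrices, as holds in every application relevant here, or by symmetrizing at the outset via $a_{i,j}\mapsto (a_{i,j}+a_{j,i})/2$, which leaves the quadratic form unchanged on real vectors, and then carrying out the same argument on the resulting symmetric coefficients. Beyond this bookkeeping point, no serious obstacle is expected; the proof is short and self-contained.
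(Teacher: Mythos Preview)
Your argument is correct and essentially identical to the paper's: split off the diagonal, bound each off-diagonal term via $|x_ix_j|\le\tfrac12(x_i^2+x_j^2)$, and collect into the row sums $A_i$. The paper writes the off-diagonal as $2\sum_{i<j}$ rather than $\sum_{i\neq j}$, which is cosmetic; you are also right that symmetry is being used implicitly (the paper's identity $\sum_{i,j}x_ix_ja_{i,j}=\sum_i x_i^2a_{i,i}+2\sum_{i<j}x_ix_ja_{i,j}$ already requires $a_{i,j}=a_{j,i}$), and your remark that in the applications the matrix is a Gram matrix handles this cleanly.
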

\begin{proof} At first  we have
$$\Big| \sum_{1\le i<j\le n}  x_ix_ja_{i,j}\Big|\le   \sum_{1\le i<j\le n}  \Big(\frac{x_i^2+x_j^2}{2} \Big)|a_{i,j} |\le \frac1{2} 
\sum_{j=1}^n x_j^2 \Big(\sum_{\ell=1\atop
\ell\not= j}^n|a_{j,\ell} |\Big)  .$$
And next  
\begin{eqnarray*}\sum_{ i,j=1}^n x_ix_ja_{i,j}=\sum_{ i=1}^n   x_i^2a_{i,i}+2\sum_{1\le i<j\le n} x_ix_ja_{i,j}  
 \ge  \sum_{ i=1}^n   x_i^2\Big(a_{i,i}- 
\ \sum_{\ell=1\atop
\ell\not= i} ^n|a_{i,\ell} |\Big).
\end{eqnarray*}
This yields the right-inequality. The left-one  follows similarly.
\end{proof} 
\begin{proof}[Proof of Proposition \ref{basicddpp}]   
    Let   $ 0< 
\a<1  -r$,
$\uY= 
\{\sqrt  \a\, a_{i,i}\, g_i,1\le i
 \le n\}$   and   $B= \{ \E X_iX_j- \E Y_iY_j, 1\le i,j\le n\}$. By applying Lemma \ref{quadraf}  to $B$, we get 
\begin{eqnarray*}\sum_{i,j=1}^n x_ix_j (\E X_iX_j- \E Y_iY_j)& =&\sum_{i,j=1}^n x_ix_jb_{i,j}
= \sum_{ =1}^n x_i^2  \Big(a_{i,i}
(1-\a)-\sum_{j=1\atop  j\not =i}^n |a_{i,j }|\Big)\cr &\ge& \sum_{ i=1}^n x_i^2  a_{i,i} (1-\a -r) \ge 0.
\end{eqnarray*}
Thus by   Anderson's inequality \cite{To} p.55, for any convex set $C$ symmetric around $0$,
\begin{equation}\label{ddpb}  \P\big\{\uX\in C\big\}\le \P\big\{\uY\in C\big\} .
\end{equation}
By choosing $C=\big\{ (x_1,\ldots, x_n)\in \R^n:|x_i|\le z\big\} $, we deduce 
 $$   \P\Big\{ \sup_{j=1}^n  |X_j|     \le z \Big\}\le \P\Big\{ \sup_{j=1}^n  |Y_j|     \le z \Big\}=  \prod_{j=1}^n\P\big\{  |  
a_{i,i}  g_i|    
\le  \frac{z   }{\sqrt  \a} \big\}  .
$$ 
Letting next 
$\a$ tend to
$1-r$, finally leads to
$$   \P\Big\{ \sup_{j=1}^n  |X_j|     \le z \Big\}\le   \prod_{j=1}^n\P\big\{  |  
a_{i,i}  g_i|    
\le  \frac{z   }{\sqrt   {1-r}} \big\}  
 =  \prod_{j=1}^n\P\big\{  |X_j|
\le  \frac{z   }{\sqrt  {1-r}} \big\}  
 .
$$ 
as claimed. \end{proof}
 \begin{remark} Condition (\ref{ddp})
 has to be related with famous Ger\v sgorin's theorem, which states that the eigenvalues of an
$n\times n$ matrix with complex entries lie in the union of the closed disks (Ger\v sgorin disks)
\begin{equation}|z-a_{i,i}| \le A_i \qq\quad  (i=1,2,\ldots , n) \label{Ger} 
  \end{equation} 
in the complex plane. This result has naturally many concrete applications. An example   to the analysis of flutter phenomenon in aircraft
design  is described in \cite{Ta}. There is an analog result due to Brauer    on ovals of Cassini stating that 
\begin{equation}\label{Ger1}   {|z-a _{i,i}||z-a_{j,j}| }\le A_iA_j \qq\quad  (i,j=1,2,\ldots , n,\ i\not = j).  
  \end{equation} 
See   \cite{Br}. In  relation with this, we have that if 
\begin{equation}\label{ddp2}|a_{i,i}||a_{k,k}|>A_i A_k   \qq\quad  (i,j=1,2,\ldots , n,\ i\not = j) 
\end{equation}
then $\det(A)>0$. Note that the relations (\ref{ddp2}) imply $|a_{i,i}| >A_i $ for all $i$ but one. 
 \end{remark}      
       
Matrices with dominant
principal diagonal are used in a crucial way in
\cite{M}  starting from (\ref{basic}), see proof of Lemma
2. Assume  
$\g(u)$   is convex on $[0,\d]$ for some
$\d>0$, and let $\s^2(x)= 2(1-\g(x))$. Let also 
$t_0<t_1<\ldots<t_n$ with $t_n-t_0\le \d$. By using convexity of $\gamma$, $  \E (X(t_i)-X(t_{i-1})(X(t_j)-X(t_{j-1})\le 0$,
so that 
\begin{eqnarray*} & &A_i:=\sum_{j=1\atop j\not= i }^n  |\E (X(t_i)-X(t_{i-1})(X(t_j)-X(t_{j-1}) |
\cr&=&-   \E (X(t_i)-X(t_{i-1})\big[X(t_n)-X(t_{i}) +   X(t_{i-1})-X(t_0)\big] 
\cr&=& \s^2(t_{i}-t_{i-1})+\frac1{2}\big[\s^2(t_{i-1}-t_{0})-\s^2(t_{i }-t_{0})+\s^2(t_{n}-t_{i})-\s^2(t_{n}-t_{i-1})
\big]
\cr&<&\s^2(t_{i}-t_{i-1}).\end{eqnarray*} However the ratio $A_i/\s^2(t_{i}-t_{i-1})$ has to be estimated in order to adjust
with assumption   (\ref{basicddp}), and we don't see how this can be done. It seems therefore that inequality (7) (and thereby  (8))   
in
\cite{M} needs a correction.  A strictly weaker estimate can be deduced from
(\ref{kurddp1}). A comparable estimate (without absolute values) however trivially follows from Slepian's lemma since the process has
negatively correlated increments, see \cite{LS} Theorem 4.5. 
\vskip 5pt
\noi{\it Final remark.}  Although not presented here, the results from the sections 2,3,4,5 admit some extensions to Gaussian random
fields defined on $\R^n$ with values in $\R^d$. 
   

\end{document}